\theoremstyle{plain}
\newtheorem{theorem}{Theorem}[section]
\newtheorem{lemma}[theorem]{Lemma}
\newtheorem{corollary}[theorem]{Corollary}
\newtheorem{proposition}[theorem]{Proposition}
\newtheorem{porism}[theorem]{Porism}
\newtheorem{observation}[theorem]{Observation}
\newtheorem{remark}[theorem]{Remark}
\newtheorem{example}[theorem]{Example}
\theoremstyle{definition}
\newtheorem{definition}[theorem]{Definition}
\def\finf{\mathop{{\rm I}\kern -.27 em {\rm F}}\nolimits}
\def\l{\lambda}
\def\Z{\Bbb Z}
\def\N{\Bbb N}
\def\({\left(}
\def\){\right)}
\def\l{\left}
\def\r{\right}
\def\supp{\text{supp }}
\newcommand{\rmv}[1]{}
\begin{document}

\title{On Laplacian Monopoles}

\author{{\bf{Cong X. Kang}}$^1$, {\bf{Gretchen L. Matthews}}$^2$ and {\bf{Justin D. Peachey}}$^3$\\
\small Texas A\&M University at Galveston, Galveston, TX 77553, USA$^1$\\
\small Virginia Tech, Blacksburg, VA 24061, USA$^2$\\
\small Arlington, VA 22204, USA$^3$ \\
{\small\em kangc@tamug.edu}$^1$; {\small\em gmatthews@vt.edu}$^2$; {\small\em jdpeachey@gmail.com}$^3$}
\maketitle
\date{}

\begin{abstract}
We consider the action of the (combinatorial) Laplacian of a finite and simple graph on integer vectors. By a \emph{Laplacian monopole} we mean an image vector negative at exactly one coordinate associated with a vertex. We consider a numerical semigroup $H_f(P)$ given by all monopoles at a vertex of a graph. The well-known analogy between finite graphs and algebraic curves (Riemann surfaces) has motivated much work. More specifically for us, the motivation arises out of the classical Weierstrass semigroup of a rational point on a curve whose properties are tied to the Riemann-Roch Theorem, as well as out of the graph theoretic Riemann-Roch Theorem demonstrated by Baker and Norine. We determine $H_f(P)$ for some families of graphs and demonstrate a connection between $H_f(P)$ and the vertex (also edge) connectivity of a graph. We also study $H_r(P)$, another numerical semigroup which arises out of the result of Baker and Norine, and explore its connection to $H_f(P)$ on graphs. We show that $H_r(P)\subseteq H_f(P)$ in a number of special cases. In contrast to the situation in the classical setting, we demonstrate that $H_f(P)\setminus H_r(P)$ can be arbitrarily large and identify a potential obstruction to the inclusion of $H_r(P)$ in $H_f(P)$ in general, though we still conjecture this inclusion. We conclude with a few open questions.

\end{abstract}

\noindent\small{\textbf{Key Words:} Laplacian of a graph, Weierstrass semigroups on graphs, Riemann-Roch on graphs, Jacobian of a graph} \\
\small {\textbf{2010 Mathematics Subject Classification:} 05C50, 14H55, 05C25, 05C40}\\

\section{Introduction}
\textbf{1.1. Overview.} Let $G = (V(G),E(G))$ be a finite, simple, and undirected graph of order $n\geq 2$. The (combinatorial) Laplacian of $G$ is the $n\times n$ square matrix $\Delta=D-A$, where $D$ is the diagonal matrix of degrees and $A$ is the adjacency matrix of $G$. The Laplacian of graphs is well-studied; see~\cite{Biggs-book,Bollobas,Lorenzini} for examples. Consider $\Delta$ as a linear map from $\mathbb{Z}^n$ to $\mathbb{Z}^n$. Letting the summands of $\mathbb{Z}^n$ be indexed by the vertices $V(G)$ of $G$, we identify the domain of $\Delta$ with integer-valued functions on $V(G)$. It immediately follows from the definition of $\Delta$ that $\sum_{v\in V(G)}(\Delta(f))_v=0$ for any $f\in \mathbb{Z}^n$. Fix $P\in V(G)$, and let $\mathcal{F}(P)=\{f\in\mathbb{Z}^n: (\Delta(f))_v\geq 0 \mbox{\ if\ }v\neq P \}$. Thus, $\mathcal{F}(P)$ is the set of $P$-monopoles, namely functions having $P$ as their only \emph{pole}. The set $H_f(P)=\{\alpha\in \mathbb{N}: \exists f\in \mathcal{\mathcal{F}(P)} \mbox{\ such that \ }(\Delta(f))_P=-\alpha \}$ is the primary object of study in this paper. \\

Why are we interested in $H_f(P)$? Some more notions and terminology are needed. We will identify $\mathbb{Z}^n$ of the codomain of $\Delta$ with the free abelian group $Div(G)$ on $V(G)$. Elements of $Div(G)$ are called \emph{divisors}, and we write $D=\sum_{i=1}^{n}a_iP_i$, where $P_i\in V(G)$, for a divisor $D\in Div(G)\cong \mathbb{Z}^n$. A divisor $D$ is called \emph{effective} if $a_i\geq 0$ for each $i\in\{1,\ldots, n\}$. Define the support of a divisor $D$ by $\supp (D)=\{P_i\in V(G): a_i\neq 0\}$. Clearly, for any divisor $D$ we can write $D=A-B$, where $A$ and $B$ are both effective, and $\supp(A)\cap\supp(B)=\emptyset$; in this case, $B$ is said to be the \emph{polar part} of $D$, and $P\in\supp(B)$ is called a \emph{pole}. Expressing the divisor of a function $f$ on $V(G)$ by writing $\Delta(f)=A-B$, with $A$ and $B$ both effective and having trivial intersection in their supports, we thus call $B$ the \emph{polar divisor} of $f$. \\

As is widely known, there is an analogy between finite graphs and algebraic curves (Riemann surfaces) -- indeed, Chung wrote a monograph \cite{Chung}, where the (analytic) Laplacian is featured, pursuant to an analogy between spectral graph theory and spectral Riemannian geometry. Now, the $H_f(P)$ defined above is the graph theoretic analogue of the (classical) Weierstrass semigroup of a rational point on a curve, and its study dates back to the work of Hurwitz \cite{Hurwitz}. Given a nonsingular curve $X$ of genus $g$ defined over a base field, the classical Weierstrass semigroup $\mathfrak{H}_f(P)$ at $P$ is the collection of nonnegative integers $\alpha$ such that there exists a function $f$ on $X$ with polar divisor exactly $\alpha P$. The classical $\mathfrak{H}_f(P)$ is well-known to satisfy the following properties:
\begin{enumerate}
\item $\mathfrak{H}_f(P)$ is a numerical semigroup; i.e., it is a subset of the set of nonnegative integers $\N$, and it is closed under addition, contains the zero element, and has a finite complement in $\N$;
\item $\N \setminus \mathfrak{H}_f(P) \subseteq [0,2g-1]$;
\item $|\N \setminus \mathfrak{H}_f(P)|=g$;
\item $\alpha \in \mathfrak{H}_f(P)$ if and only if $\alpha\in\mathfrak{H}_r(P)=\{\beta\in\N:\ell(\beta P) = \ell ((\beta - 1)P) +1\}$, where $\ell(\beta P)$ denotes the dimension of the space of rational functions on $X$ having a pole only at $P$ and of order at most $\beta$.
\end{enumerate}

For more on classical Weierstrass semigroups, please see \cite{arbarello, survey}. Returning to graphs, in the present terminology, $H_f(P)$ is again the collection of nonnegative integers $\alpha$ that arise as coefficients of the polar divisors of functions (defined on $V(G)$) with a single pole at $P$; it is easily seen that it is a numerical semigroup. In \cite{BN}, Baker and Norine defined the rank $r(D)$ of a divisor $D$ on a graph $G$, and they proved that the rank function satisfies a Riemann-Roch-type theorem; they carried further their investigation in \cite{B,BNhyperelliptic}. The Riemann-Roch theorem on graphs by Baker and Norine sets up a study of Weierstrass semigroups on graphs in parallel with that in the classical setting. We determine $H_f(P)$ for trees, unicyclic graphs, and complete graphs. We demonstrate a connection between $H_f(P)$ and the vertex (also edge) connectivity of a graph. Denote the analogue of $\mathfrak{H}_r(P)$ in graphs by $H_r(P)$. We also study $H_r(P)$ and explore its connection to $H_f(P)$ on graphs. We show that $H_r(P)\subseteq H_f(P)$ in a number of special cases, including for a vertex $P$ of degree-one or having a neighbor of degree-one on a graph in general, any vertex $P$ on a tree, and on a unicyclic graph with some mild symmetry requirement at the vertex $P$. We identify a potential obstruction to the inclusion of $H_r(P)$ in $H_f(P)$ in general, though we are still inclined to conjecture this inclusion. In contrast to the equivalence given in claim 4 above, we demonstrate that $H_f(P) \setminus H_r(P)$ can be arbitrarily large. We conclude with a few open questions.\\

Here is a further bit of context and justification for the present line of inquiry. If we consider the Laplacian $\Delta$ of a graph of order $n$ as a vector space map over a field $\mathbb{F}$, then the image (or range) of $\Delta$ is simply the orthogonal complement of the all-one vector in $\mathbb{F}^n$, and this would be the end of the story. The action of $\Delta$ on $\Z^n$, however, has given rise to the large, rich, and interesting theory of the Jacobian (the torsion part of $\frac{\Z^n}{\Delta(\Z^n)}$) of a finite graph. We will say a bit more about the Jacobian herein later; see \cite{Bacher et al.,Biggs-Potential Theory,Biggs-Chip Firing,Cori-Rossin, Gsil-Royle,Lorenzini} for details. It is not a priori clear how one may determine the image of $\Delta$ from its cokernel. For example, a unicyclic graph $G$ has Jacobian isomorphic to $\frac{\Z}{k\Z}$, where $k$ is the length of the unique cycle in $G$, by the matrix tree theorem and~\cite{Lorenzini}. However, as we'll see in Section~\ref{exs}, $\Delta(\Z^n)$ is not determined by $k$. There are discussions on the structure of $\Delta(\Z^n)$ (the Laplacian lattice) to be found in the existing literature; see~\cite{Amini-Manjunath, Manjunath} for examples.\\

\textbf{1.2. More Notation and Basic Facts.} We will call $g=|E(G)|-|V(G)|+1$ the \emph{genus} of the graph $G$, in conformity with~\cite{BN}. We are mindful that $g$ is commonly known as the ``cycle rank" or the ``cyclomatic number" in graph theory literature wherein the term genus refers to a different notion. As explained in~\cite{BN} and also evident in this paper, calling $g$ the genus is a recognition of the fact that it plays a role for graphs analogous to what genus (the number of holes) plays for Riemann surfaces or algebraic curves; such thinking should be helpful in understanding the present subject matter.\\

The set of positive integers is denoted by $\Z^+$. Given $a_1, \dots, a_k \in \Z^+$ with $\gcd(a_1, \dots, a_k)=1$, the numerical semigroup generated by $\{a_1, \dots, a_k\}$ is $\l< a_1, \dots, a_k \r>=\{\sum_{i=1}^k c_i a_i, c_i \in \N\}$. We say that $\alpha$ is a gap of $\l< a_1, \dots, a_k \r>$ (equivalently, of a numerical semigroup $S$) if and only if $\alpha \in \N \setminus \l< a_1, \dots, a_k \r>$ (equivalently, $\alpha \in \N \setminus S$). A general reference for numerical semigroups is \cite{ns}. The neighborhood of a vertex $v$ is $N(v)=\{w\in V(G):wv\in E(G)\}$.\\

The degree of a divisor $D=\sum_{i=1}^n a_i P_i$ is $\deg(D) = \sum_{i=1}^n a_i$. Given $A, B\in Div(G)$, we declare $B\geq A$ if $B-A$ is effective; this imposes a partial order on $Div(G)$. The subset of all effective divisors of degree $k$ is denoted by $Div_{+}^k(G) = \{D \in Div(G) : D \geq 0 \text{\ and\ } \deg(D) = k\}$.  \\

Denote the set of integer-valued functions $f$ on $V(G)$ by $\mathcal{M}(G)$; thus $\mathcal{M}(G)\cong\Z^n$. It can be easily checked from the definition of the Laplacian that the \emph{divisor of} $f$, namely $\Delta(f)$, can be more explicitly expressed as follows:

$$\Delta(f)=\sum_{v \in V(G)}\left(\sum_{w \in N(v)} (f(v) - f(w))\right)v = \sum_{v \in V(G)}\left(f(v) \deg v - \sum_{w \in N(v)} f(w) \right) v.$$ \\

We will write $\Delta(f)= \sum_{v\in V(G)}\Delta_v(f)\,v$ and thus indicate the coefficient of a vertex $v$ in $\Delta(f)$ by $\Delta_v(f)$. We say that $f$ has a pole at $v$ (resp., zero at $v$) if $\Delta_v(f) < 0$ (resp.,  $\Delta_v(f) > 0$). A divisor $D$ is called \emph{principal} if $D=\Delta(f)$ for some $f\in \mathcal{M}(G)$.

The \emph{Jacobian} of a graph $G$ is the quotient group $Jac(G)=\frac{Div^0(G)}{Prin(G)}$, where $Div^0(G)$ denotes the set of all divisors of degree zero on $G$ and $Prin(G)$ denotes the set of all principal divisors on $G$. It is not hard to see $\frac{\Z^n}{\Delta(\Z^n)}\cong \Z\oplus Jac(G)$. The Jacobian of a graph has also been variously referred to as the \emph{Picard group}, the \emph{critical group}, or the \emph{sandpile group}; it is well-studied; see above citations.\\

The rest of this paper is organized as follows. Section \ref{main} contains the more general results. Examples are given in Section \ref{exs}. The paper concludes with further discussion and open questions in Section \ref{conclusion}.

\section{Weierstrass semigroups on graphs} \label{main}

We begin this section with a useful observation. The linearity of the Laplacian and the fact that a function $h$ constant on $V(G)$ has $\Delta(h)=0$ implies $\Delta(f+a)=\Delta(f)$ for any $a\in\Z$, where $(f+a)(v)=f(v)+a$ for $v\in V(G)$. Taking $a=\max \left\{ f(v): v \in V(G) \right\}$, we have $(f-a)(v)=f(v)-a \leq 0$.

\begin{observation} \label{fact}
Given $f, h \in \mathcal M(G)$, $f+h \in \mathcal{M}(G)$ and $\Delta(f+h)=\Delta(f)+\Delta(h)$. In particular, $\Delta(f+a)=\Delta(f)$ for any $a\in\Z$, and we can choose $b\in\Z$ such that $(f+b)(v)\leq 0\ \forall v\in V(G)$.
\end{observation}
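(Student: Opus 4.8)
The plan is to observe that all three assertions reduce to the fact that $\Delta$ is a $\Z$-linear map, together with the single combinatorial computation that $\Delta$ annihilates constant functions. I would treat the three claims in turn, and none should require more than the explicit coefficient formula for $\Delta_v(f)$ displayed just above the statement.

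First, for closure: since $\mathcal M(G)\cong\Z^n$ is simply the additive group of integer-valued functions on $V(G)$, the pointwise sum $f+h$ is again integer-valued, so $f+h\in\mathcal M(G)$ is immediate. For additivity, I would invoke that $\Delta=D-A$ acts by matrix multiplication, which is linear; equivalently, reading off the displayed formula, each coefficient $\Delta_v(f)=f(v)\deg v-\sum_{w\in N(v)}f(w)$ is visibly $\Z$-linear in the argument $f$, whence $\Delta_v(f+h)=\Delta_v(f)+\Delta_v(h)$ for every $v$, and thus $\Delta(f+h)=\Delta(f)+\Delta(h)$.

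Next, for the ``in particular'' that $\Delta(f+a)=\Delta(f)$ when $a\in\Z$ is identified with the constant function of value $a$, I would compute $\Delta_v$ of that constant function directly: $\Delta_v(a)=a\deg v-\sum_{w\in N(v)}a=a\deg v-a|N(v)|=0$, using $|N(v)|=\deg v$. Hence $\Delta(a)=0$, and additivity gives $\Delta(f+a)=\Delta(f)+\Delta(a)=\Delta(f)$. Finally, the existence of $b$ making $f+b$ nonpositive is settled by the explicit choice $b=-\max\{f(v):v\in V(G)\}$, exactly as in the paragraph preceding the statement, since then $(f+b)(v)=f(v)-\max_w f(w)\leq 0$ for all $v$.

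As for difficulty, I should be candid that there is no real obstacle here. The only step using more than formal linearity is the vanishing $\Delta(a)=0$, which rests on the elementary identity $|N(v)|=\deg v$ (equivalently, that the row sums of the Laplacian are zero); everything else is immediate from $\Delta$ being a linear map on $\Z^n$.
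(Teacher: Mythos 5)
Your proof is correct and follows essentially the same route as the paper, which justifies the observation in the paragraph preceding it via the linearity of $\Delta$, the vanishing of $\Delta$ on constant functions, and the explicit choice $a=\max\{f(v):v\in V(G)\}$ (your $b=-a$). Your explicit coefficient computation $\Delta_v(a)=a\deg v-a|N(v)|=0$ just makes precise the fact the paper cites without calculation.
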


Next, we check that $H_f(P)$ is what is customarily referred to as a numerical semigroup -- though it obviously contains zero and is thus a monoid.

\begin{proposition}\label{semigp}
Let $P$ be a vertex of a graph $G$. If $\alpha, \beta \in H_f(P)$, then $\alpha + \beta \in H_f(P)$.
\end{proposition}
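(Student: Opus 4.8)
The plan is to prove closure under addition directly from the linearity of the Laplacian recorded in Observation \ref{fact}, since $H_f(P)$ is defined entirely in terms of the signs of the coordinates $\Delta_v(f)$ of an image vector.

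First I would unpack the hypotheses. Suppose $\alpha, \beta \in H_f(P)$. By definition there exist $f \in \mathcal{F}(P)$ with $\Delta_P(f) = -\alpha$ and $h \in \mathcal{F}(P)$ with $\Delta_P(h) = -\beta$; membership in $\mathcal{F}(P)$ means precisely that $\Delta_v(f) \geq 0$ and $\Delta_v(h) \geq 0$ for every vertex $v \neq P$.

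The key step is then to exhibit $f + h$ as a witness for $\alpha + \beta$. By Observation \ref{fact} we have $\Delta(f+h) = \Delta(f) + \Delta(h)$, so that $\Delta_v(f+h) = \Delta_v(f) + \Delta_v(h)$ at each vertex $v$. For $v \neq P$ the right-hand side is a sum of two nonnegative integers, hence nonnegative, which shows $f + h \in \mathcal{F}(P)$. At the pole itself, $\Delta_P(f+h) = \Delta_P(f) + \Delta_P(h) = -\alpha - \beta = -(\alpha + \beta)$. Since $\alpha + \beta \in \N$, this produces a function in $\mathcal{F}(P)$ realizing the value $-(\alpha+\beta)$ at $P$, and therefore $\alpha + \beta \in H_f(P)$.

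I expect essentially no obstacle here: the statement is immediate once additivity of $\Delta$ is in hand, and the only point requiring a moment's care is verifying that $f+h$ still lies in $\mathcal{F}(P)$, i.e., that coordinatewise nonnegativity away from $P$ is preserved under addition, which it is trivially. The entire substance of the argument is thus carried by the linearity asserted in Observation \ref{fact}.
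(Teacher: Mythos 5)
Your proof is correct and follows essentially the same route as the paper's: both arguments take witnesses $f, h$ for $\alpha$ and $\beta$ and use the additivity of $\Delta$ from Observation~\ref{fact} to show $f+h$ witnesses $\alpha+\beta$. The only difference is notational---you work coordinatewise with $\Delta_v$, while the paper phrases the same computation in divisor form, writing $\Delta(f)=A-\alpha P$ and $\Delta(f+h)=A+B-(\alpha+\beta)P$.
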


\begin{proof}
If $\alpha, \beta \in H_f(P)$, there exist $f, h \in \mathcal M(G)$ such that $\Delta(f) = A - \alpha P$ and $\Delta(h) = B - \beta P$ where $A,B \geq 0$ and $P \notin \supp(A)\cup\supp(B)$. By Observation~\ref{fact}, $\Delta(f+h) = A+B-\left( \alpha + \beta \right)P$, and we have $\alpha + \beta \in H_f(P).$
\end{proof}

Now, noting $H_f(P) = \left\{ \alpha \in \mathbb{N} : \exists f \in  \mathcal M(G) \text{ with } 																							\Delta(f) = A - \alpha P, A \geq 0 \text{ and } P \notin \text{supp} A \right\}$, we call $H_f(P)$ the Weierstrass semigroup at the vertex $P$, in alignment with the theory on algebraic curves. To aid our investigation of $H_f(P)$, we consider another set $H_r(P)$, as well as some further notions defined in \cite{BN}.\\

Let $S(D)=\{k \in \N :\forall E\in Div_{+}^k(G)\;\exists f\in\mathcal{M}(G)\text{\ such\ that\ }D+\Delta(f)\geq E\}$; the \emph{rank} $r(D)$ of the divisor $D$ is $\max S(D)$ if $S(D)\neq \emptyset$, and $r(D)=-1$ otherwise. Then, define

\[H_r(P) = \left\{ \alpha \in \mathbb{N}\ :\ r(\alpha P) = r((\alpha-1)P) + 1\right\}.\]

In \cite[Theorem 1.12]{BN}, Baker and Norine prove the following analogue of the Riemann-Roch Theorem for finite graphs: $$r(D) = \deg(D) + 1 - g + r(K-D), \hspace{1in} \mbox{(R-R)}$$ where $K=\sum_{v\in V(G)}(\deg(v)-2)v$ is called the \emph{canonical divisor} on $G$. Notice that $\deg(K)=2|E(G)|-2|V(G)|=2g-2$. It follows immediately from the definition of $r(D)$ that $r(D) =-1$ if $\deg D<0$, as a divisor of negative degree cannot be greater or equal to a divisor of nonnegative degree. Notice from (R-R) that if $A \leq B$, then $r(B)-r(A) \leq \deg(B-A)$ and, in particular, we have $r(\alpha P)-r((\alpha-1)P)\leq 1$.

We will show that $\N \setminus H_r(P) \subseteq [0,2g-1]$, and $|\N \setminus H_r(P)|=g$. Moreover, we will show $|H_f(P) \setminus H_r(P)|$ can be arbitrarily large. Let $G_f(P)=\N \setminus H_f(P)$ and $G_r(P)=\N \setminus H_r(P)$.

\begin{lemma}\label{hrgaps}
Let $G$ be a graph, and let $P \in V(G)$.  If $\alpha \geq 2g$, then $\alpha \in H_r(P)$.  Hence, $G_r(P)$ is finite. In fact, $|G_r(P)| = g$ and $G_r(P) \subseteq [0,2g-1]$.
\end{lemma}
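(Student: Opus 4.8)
The plan is to reduce everything to the behaviour of the single-variable function $\alpha \mapsto r(\alpha P)$ and then to count its ``jumps'' by a telescoping sum, the whole argument resting on the Riemann--Roch formula (R-R) and on $\deg K = 2g-2$.

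First I would record the elementary monotonicity $A \le B \Rightarrow r(A) \le r(B)$, which is not among the facts stated above but follows straight from the definition of rank: if $k \in S(A)$ and $B \ge A$, then for any $E \in Div_{+}^k(G)$ a function $f$ realizing $k\in S(A)$ satisfies $B + \Delta(f) \ge A + \Delta(f) \ge E$, so $k \in S(B)$ and $S(A) \subseteq S(B)$. Combined with the upper bound $r(\alpha P) - r((\alpha-1)P) \le 1$ noted right after (R-R), this shows that for every $\alpha \ge 0$ the difference $r(\alpha P) - r((\alpha-1)P)$ is $0$ or $1$; and by the very definition of $H_r(P)$ it equals $1$ exactly when $\alpha \in H_r(P)$. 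Thus each $\alpha$ contributes the indicator of its membership in $H_r(P)$.

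For the first assertion I would apply (R-R) to $\alpha P$ and to $(\alpha-1)P$. Since $\deg K = 2g-2$, we have $\deg(K - \alpha P) = 2g-2-\alpha$ and $\deg(K-(\alpha-1)P) = 2g-1-\alpha$, both negative once $\alpha \ge 2g$, so that $r(K-\alpha P)$ and $r(K-(\alpha-1)P)$ both equal $-1$ (a divisor of negative degree has rank $-1$). Hence $r(\alpha P) = \alpha - g$ and $r((\alpha-1)P) = \alpha - 1 - g$ for $\alpha \ge 2g$, giving a jump of exactly $1$; this proves $\alpha \in H_r(P)$ for $\alpha \ge 2g$, whence $G_r(P) \subseteq [0,2g-1]$ and $G_r(P)$ is finite.

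For $|G_r(P)| = g$ I would telescope. The same degree count shows $r(K-(2g-1)P) = -1$ (its degree is $-1$), so (R-R) gives $r((2g-1)P) = g-1$, while $r(-P) = -1$ since $\deg(-P) < 0$. Summing the jumps,
\[ r((2g-1)P) - r(-P) = \sum_{\alpha=0}^{2g-1}\bigl(r(\alpha P) - r((\alpha-1)P)\bigr) = g, \]
and since each summand is the indicator of $\alpha \in H_r(P)$, exactly $g$ of the $2g$ integers in $\{0,1,\dots,2g-1\}$ lie in $H_r(P)$; the remaining $2g-g = g$ lie in $G_r(P)$, and as $G_r(P) \subseteq [0,2g-1]$ these account for all gaps, giving $|G_r(P)| = g$ (the case $g=0$ being consistent, as $[0,2g-1]=\emptyset$ then forces $G_r(P)=\emptyset$). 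The only genuinely non-formal step is the monotonicity recorded at the outset: it is what guarantees the jumps are nonnegative, so that the telescoping sum literally counts elements of $H_r(P)$. Everything else is bookkeeping with (R-R) and $\deg K = 2g-2$.
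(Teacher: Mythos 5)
Your proof is correct and follows essentially the same route as the paper's: the Riemann--Roch formula with the degree computation $\deg(K-(\alpha-1)P)\leq -1$ for $\alpha\geq 2g$ gives the inclusion $G_r(P)\subseteq[0,2g-1]$, and the gap count $|G_r(P)|=g$ comes from counting unit jumps of $\alpha\mapsto r(\alpha P)$ between $r(0P)=0$ (or $r(-P)=-1$ in your indexing) and $r((2g-1)P)=g-1$. The only difference is one of explicitness: you prove the monotonicity $A\leq B\Rightarrow r(A)\leq r(B)$ from the definition of rank, whereas the paper uses it implicitly in its chain of inequalities $0=r(0P)\leq r(P)\leq\cdots\leq r((2g-1)P)=g-1$.
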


\begin{proof}
If $\alpha \geq 2g$, then $\deg(K-(\alpha-1)P)=2g-2-(\alpha-1)\leq 2g-2-(2g-1)=-1$. Thus $r(K-(\alpha-1)P)=-1=r(K-\alpha P)$, and we have
$$r(\alpha P)=\alpha+1-g+(-1)=\alpha-g > \left(\alpha-1\right)+1-g+(-1)=r \left( (\alpha-1)P \right).$$
Hence, $\alpha \in H_r(P)$ and $G_r(P)  \subseteq [0,2g-1]$. To see that $|G_r(P)| = g$,
notice that $$0=r(0P) \leq r(P) \leq r(2P) \leq r \left((2g-1)P \right)=g-1.$$
\end{proof}

Given a divisor $D$ with $r(D)=k-1$, define $\mbox{Obstr}(D)=\{A\in Div_{+}^{k}(G):\Delta(f)+D-A\not\geq 0\;\forall f\in \mathcal{M}(G)\}$. For a divisor $D=\alpha P$ with $r(\alpha P)=k-1$, also define $\mbox{Obstr}_0(\alpha P)=\{A\in\mbox{Obstr}(\alpha P):\supp(A)\not\ni P\}$. Whence we define $$H_{r}^{red}(P) = \{ \alpha \in \mathbb{N}\ :\ r(\alpha P) = r((\alpha-1)P) + 1 \mbox{ and } \mbox{Obstr}_0((\alpha-1)P)\neq \emptyset\}.$$

The following result establishes the relationship between $H_{r}^{red}(P)$ and $H_f(P)$.

\begin{theorem} \label{containment}
Let $G$ be a graph and $P$ be a vertex of $G$.  Then \[H_r^{red}(P) \subseteq H_f(P).\]  
\end{theorem}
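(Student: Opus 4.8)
The plan is to start from $\alpha\in H_r^{red}(P)$ and produce an explicit $f\in\mathcal{M}(G)$ whose polar divisor is exactly $\alpha P$, which is precisely what membership in $H_f(P)$ requires. First I would unpack the two defining conditions. Set $k=r(\alpha P)$; the rank-jump condition $r(\alpha P)=r((\alpha-1)P)+1$ gives $r((\alpha-1)P)=k-1$, so that by definition $\mbox{Obstr}((\alpha-1)P)\subseteq Div_{+}^{k}(G)$. The nonemptiness of $\mbox{Obstr}_0((\alpha-1)P)$ then supplies a divisor $A\in Div_{+}^{k}(G)$ with $P\notin\supp(A)$ and with the obstruction property $(\alpha-1)P+\Delta(f)-A\not\geq 0$ for every $f\in\mathcal{M}(G)$. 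On the other hand, since $r(\alpha P)=k$ we have $k\in S(\alpha P)$, so applying the definition of $S$ to $E=A\in Div_{+}^{k}(G)$ yields some $f\in\mathcal{M}(G)$ with $\alpha P+\Delta(f)\geq A$.

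Fixing such an $f$, I would set $D=\alpha P+\Delta(f)-A\geq 0$, so that $\Delta(f)=(A+D)-\alpha P$ with $A+D\geq 0$. To conclude $\alpha\in H_f(P)$ it suffices to show that $A+D$ is the effective part of $\Delta(f)$ and $\alpha P$ its polar part, i.e. that $P\notin\supp(A+D)$. Since $P\notin\supp(A)$ by the choice of $A$, this reduces to checking that the coefficient of $P$ in $D$ is zero.

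This coefficient check is the crux of the argument and the one place where the obstruction hypothesis is genuinely used. If the coefficient of $P$ in $D$ were positive, then $D-P\geq 0$; but $D-P=(\alpha-1)P+\Delta(f)-A$, so this would say $(\alpha-1)P+\Delta(f)-A\geq 0$, contradicting $A\in\mbox{Obstr}((\alpha-1)P)$. Hence the $P$-coefficient of $D$ vanishes, $P\notin\supp(A+D)$, and $\Delta_P(f)=-\alpha$; thus $\Delta(f)=(A+D)-\alpha P$ realizes $\alpha$ as a polar order at $P$ and $\alpha\in H_f(P)$. I expect everything outside this step to be routine bookkeeping from the definitions of $r(\cdot)$, $S(\cdot)$, $\mbox{Obstr}$, and $H_f(P)$; the substance is that the condition $P\notin\supp(A)$ packaged into $\mbox{Obstr}_0$, together with $A$ obstructing $(\alpha-1)P$, is exactly what forces the pole at $P$ to have order precisely $\alpha$ with no compensating positive mass there.
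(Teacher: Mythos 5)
Your proof is correct and takes essentially the same approach as the paper: choose $A\in\mbox{Obstr}_0((\alpha-1)P)$, use $r(\alpha P)=k$ to obtain $f$ with $\alpha P+\Delta(f)\geq A$, and then invoke the obstruction property of $A$ at level $(\alpha-1)P$ to pin the pole order at $P$ to exactly $\alpha$. Your check that the $P$-coefficient of $D=\alpha P+\Delta(f)-A$ vanishes is just a repackaging of the paper's two inequalities $(\dag)$ and $(\ddag)$, which together force $\Delta_P(f)=-\alpha$.
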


\begin{proof}
Let $\alpha \in H_r^{red}(P)$.  Then $r((\alpha - 1)P) = k-1$ and $r(\alpha P) = k$ for some $k \in \Z$. It follows that there exists $A\in \mbox{Obstr}_0((\alpha-1)P)$ so that for all $f \in \mathcal{M}(G)$, $$(\alpha - 1) P + \Delta(f)-A \not\geq 0.$$ Since $r(\alpha P)=k$, there exists $h \in \mathcal{M}(G)$ so that $$\alpha P +\Delta(h)- A \geq 0\ \ (*); \mbox{ yet}$$
$$(\alpha - 1) P + \Delta(h)- A \not\geq 0\ \ (**).$$
Inequality (*) implies that $\Delta_Q(h)-A_Q\geq 0\ \forall Q\neq P$; which, along with the fact $P\not\in \supp(A)$, forces inequality (**) to imply $\Delta_P(h)+(\alpha-1)<0\ (\dag)$. Inequality (*), together with $P\not\in \supp(A)$, also implies $\Delta_P(h)+\alpha \geq 0\ (\ddag)$. Now, inequalities $\dag$ and $\ddag$ together imply that $-\Delta_P(h)=\alpha$, and thus $\alpha \in H_f(P)$.
\end{proof}

\begin{remark}
\emph{Consider now the case where $r(\alpha P)=r((\alpha -1)P)+1$ and $\mbox{Obstr}_0((\alpha-1)P)=\emptyset$. Take $A\in\mbox{Obstr}((\alpha-1)P)$ and write $A=A'+\beta P$, where $\supp(A')\not\ni P$ and $A'\geq 0$. Notice we have $0<\beta<\deg(A)=r(\alpha P)\leq \alpha$ in this case. The two relevant inequalities are $\Delta(f)+(\alpha-\beta)P-A'\geq 0$ and $\Delta(f)+(\alpha-\beta-1)P-A' \not\geq 0$. Whence we have $r((\alpha-\beta)P)=r((\alpha-\beta-1)P)+1$ and $\mbox{Obstr}_0((\alpha-\beta-1)P)\supseteq \{A'\}\neq\emptyset$. Thus $\alpha-\beta\in H_r^{red}(P)\subseteq H_f(P)$, whereas $\alpha\in H_r(P)$. By definition $H_r^{red}(P)\subseteq H_r(P)$.} \\
\end{remark}

We will see that $H_r^{red}(P)=H_r(P)$ for some graphs, and we conjecture that $H_r^{red}(P)=H_r(P)$ for any graph $G$. Taking a step towards proving the conjecture, we need another result from Baker and Norine, namely \cite[Proposition 3.1]{BN}. For $v\in A\subseteq V(G)$, let $\mbox{outdeg}_A(v)=|\{w\in V(G): vw\in E(G) \mbox{ and } w\in V(G)-A\}|$. A \emph{G-parking} function relative to a base vertex $v_0\in V(G)$ is an integer-valued function $f$ satisfying the conditions: 1) $f(v)\geq 0$ $\forall\; v\in V(G)-\{v_0\}$; 2) $\exists\; v\in A$ with $f(v)<\mbox{outdeg}_A(v)$ for every nonempty set $A\subseteq V(G)-\{v_0\}$. A divisor $D$ is said to be $v_0$-\emph{reduced} if the function defined by $v\mapsto D(v)$ is a G-parking function relative to $v_0$.\\

\begin{proposition}{\cite[Proposition 3.1]{BN}}\label{canon-rep}
Fix a base vertex $v_0\in V(G)$. Then, for every $D\in Div(G)$, there exists a unique $v_0$-reduced divisor $D'\in V(G)$ such that $D'$ and $D$ are linearly equivalent; i.e., $D'-D=\Delta(f)$ for some $f\in \mathcal{M}(G)$.
\end{proposition}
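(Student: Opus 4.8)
The plan is to prove existence and uniqueness separately, working entirely through chip-firing and the explicit formula $\Delta_v(f)=\sum_{w\in N(v)}\bigl(f(v)-f(w)\bigr)$ recorded above. Throughout, \emph{firing a set} $A\subseteq V(G)$ means replacing a divisor $D$ by $D+\Delta(-\mathbf{1}_A)$, where $\mathbf{1}_A$ is the indicator function of $A$; a direct substitution into the formula shows that firing $A$ decreases the coefficient at each $v\in A$ by exactly $\mathrm{outdeg}_A(v)$ and does not decrease the coefficient at any vertex outside $A$, while leaving the degree unchanged (cf.\ Observation~\ref{fact}). Since $G$ is connected, the \emph{reduced Laplacian} $\widetilde{\Delta}$, obtained from $\Delta$ by deleting the row and column indexed by $v_0$, is a symmetric positive-definite, nonsingular $M$-matrix, so $\widetilde{\Delta}^{-1}$ has non-negative entries; this inverse-positivity is the analytic input I will use for existence.

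For uniqueness I would argue as follows. Suppose $D$ and $D'$ are both $v_0$-reduced and $D'=D+\Delta(f)$ for some non-constant $f\in\mathcal{M}(G)$. Let $A=\{v:f(v)=\max_w f(w)\}$, a nonempty proper subset of $V(G)$. Interchanging the roles of $D$ and $D'$ and replacing $f$ by $-f$ if necessary (which swaps source and target), I may assume $v_0\notin A$, so that $\emptyset\neq A\subseteq V(G)-\{v_0\}$. For each $v\in A$, every neighbor $w\notin A$ satisfies $f(v)-f(w)\geq 1$, hence $\Delta_v(f)=\sum_{w\in N(v)}\bigl(f(v)-f(w)\bigr)\geq \mathrm{outdeg}_A(v)$. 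Combining this with $D(v)\geq 0$ gives $D'(v)=D(v)+\Delta_v(f)\geq\mathrm{outdeg}_A(v)$ for \emph{every} $v\in A$, which directly contradicts condition~(2) in the definition of a $G$-parking function applied to $D'$ and the set $A$. Therefore $f$ is constant and $D=D'$.

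For existence I would proceed in two stages. First, produce some $D_1\sim D$ satisfying condition~(1), i.e.\ $D_1(v)\geq 0$ for all $v\neq v_0$; this is the ``pay off debt by borrowing'' step, justified by the inverse-positivity of $\widetilde{\Delta}$, which guarantees one can anti-fire (borrow) enough to clear all negative coefficients off $v_0$. Second, starting from $D_1$, repeatedly fire any nonempty $A\subseteq V(G)-\{v_0\}$ witnessing a failure of condition~(2), namely one with $D(v)\geq\mathrm{outdeg}_A(v)$ for every $v\in A$. The computation in the first paragraph shows such a firing preserves condition~(1), and when no violating set remains, condition~(2) holds and the current divisor is $v_0$-reduced.

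The main obstacle is showing that this second stage terminates. I would control it by tracking the cumulative firing count $u(v)\geq 0$, noting $u(v_0)=0$ since no set we fire meets $v_0$. At every step the divisor reached equals $D_1-\Delta(u)$, and preservation of condition~(1) forces $\Delta_v(u)\leq D_1(v)$ for all $v\neq v_0$; because $u(v_0)=0$, the left-hand side is exactly $(\widetilde{\Delta}\,u')(v)$ for $u'=u|_{V(G)-\{v_0\}}$. Applying the entrywise non-negative $\widetilde{\Delta}^{-1}$ yields $u'\leq\widetilde{\Delta}^{-1}\,D_1|_{V(G)-\{v_0\}}$, a fixed finite upper bound. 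Since every firing strictly increases $u$ on a nonempty set and $u$ is integer-valued and bounded above, only finitely many firings occur, and the process halts at a $v_0$-reduced divisor linearly equivalent to $D$. This $M$-matrix positivity underlies both the borrowing step and the termination bound; the remainder is bookkeeping with the Laplacian formula.
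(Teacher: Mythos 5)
The paper does not prove this proposition at all: it is imported verbatim from Baker and Norine \cite[Proposition 3.1]{BN} and used as a black box in the corollary that follows it. So there is no internal proof to compare against, and your argument stands as a self-contained alternative; having checked it, it is correct. Your uniqueness half is exactly the standard Baker--Norine argument: take the max-set $A$ of a hypothetical non-constant $f$, arrange $v_0\notin A$ by negating $f$ (the max-set and min-set of a non-constant $f$ are disjoint, so this is legitimate), observe $\Delta_v(f)\geq \mathrm{outdeg}_A(v)$ for $v\in A$, and contradict condition (2) of the G-parking definition for the target divisor; all signs are consistent with this paper's convention for $\Delta$. Your existence half replaces Baker--Norine's combinatorial bookkeeping (a vertex ordering by distance from $v_0$ and a monovariant) with the M-matrix structure of the reduced Laplacian, and the resulting bound $u'\leq \widetilde{\Delta}^{-1}D_1|_{V(G)-\{v_0\}}$ is a clean, quantitative termination argument: it bounds the total number of firings, not merely their finiteness. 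One spot deserves more care, though it is routine to patch: in the borrowing step you need an \emph{integer} vector $f'$ with $\widetilde{\Delta}f'\geq c$ (where $c=-D|_{V(G)-\{v_0\}}$), and inverse-positivity alone does not immediately deliver this, since rounding up the rational solution $\widetilde{\Delta}^{-1}c$ can break the inequality ($\widetilde{\Delta}$ has negative off-diagonal entries, so increasing one coordinate of $f'$ can decrease other coordinates of $\widetilde{\Delta}f'$). The fix: $z=\mathrm{adj}(\widetilde{\Delta})\mathbf{1}$ is an entrywise non-negative integer vector with $\widetilde{\Delta}z=\det(\widetilde{\Delta})\mathbf{1}$ entrywise strictly positive, so $f'=kz$ clears any debt for $k$ sufficiently large. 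With that two-line patch (and the standing assumption that $G$ is connected, which is what makes $\widetilde{\Delta}$ nonsingular), your proof is complete.
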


\begin{corollary}
Suppose $P\in V(G)$ has degree one or has a neighbor of degree one, then $H_r(P)=H_r^{red}(P)\subseteq H_f(P)$.
\end{corollary}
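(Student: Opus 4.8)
The plan is to reduce the corollary to the single containment $H_r(P) \subseteq H_r^{red}(P)$. Indeed, $H_r^{red}(P) \subseteq H_f(P)$ is precisely Theorem~\ref{containment}, and $H_r^{red}(P) \subseteq H_r(P)$ is immediate from the definitions, since the defining requirement of $H_r^{red}(P)$ contains that of $H_r(P)$ together with the extra condition $\mbox{Obstr}_0((\alpha-1)P) \neq \emptyset$. Thus the whole statement follows once I show that, under the degree-one hypothesis, every $\alpha \in H_r(P)$ lies in $H_r^{red}(P)$; equivalently, that whenever $r(\alpha P) = r((\alpha-1)P)+1$ the set $\mbox{Obstr}_0((\alpha-1)P)$ is nonempty. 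I first record that $\mbox{Obstr}((\alpha-1)P)$ is itself always nonempty: writing $k-1 = r((\alpha-1)P)$, the fact that $k \notin S((\alpha-1)P)$ produces an effective divisor of degree $k$ that obstructs. The only issue is to find such an obstruction whose support avoids $P$.

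The engine of the argument is the observation that a single chip at a leaf is linearly equivalent to a single chip at its unique neighbor. Concretely, if $Q$ has degree one with unique neighbor $W$, let $\chi_Q \in \mathcal{M}(G)$ be the function equal to $1$ at $Q$ and $0$ elsewhere; the explicit Laplacian formula gives $\Delta(\chi_Q) = Q - W$, so $Q$ and $W$ are linearly equivalent as divisors. Under the hypothesis this supplies a vertex $W \neq P$ with $P$ linearly equivalent to $W$: if $\deg P = 1$, take $W$ to be the unique neighbor of $P$ (so $\Delta(\chi_P) = P - W$); if instead $P$ has a degree-one neighbor $Q$, take $W = Q$ (so $\Delta(\chi_Q) = Q - P$). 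In either case $W \neq P$ because $G$ is simple, and $P$ is linearly equivalent to $W$.

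Next I transport an obstruction off $P$. Take any $A \in \mbox{Obstr}((\alpha-1)P)$, of degree $k$, and write $A = A' + \beta P$ with $A' \geq 0$, $P \notin \supp(A')$, and $\beta \geq 0$. Set $\tilde A = A' + \beta W$. Then $\tilde A$ is effective of degree $k$, and $P \notin \supp(\tilde A)$, since $W \neq P$ and $P \notin \supp(A')$. Moreover $\tilde A - A = \beta(W - P)$ is principal, say $\tilde A = A + \Delta(g)$ for a suitable integer multiple $g$ of $\chi_P$ or $\chi_Q$. The obstruction property is preserved under this replacement: for every $f \in \mathcal{M}(G)$ one has $(\alpha-1)P + \Delta(f) - \tilde A = (\alpha-1)P + \Delta(f-g) - A$, and since $f \mapsto f-g$ is a bijection of $\mathcal{M}(G)$, the nonexistence of an $f$ making $(\alpha-1)P + \Delta(f) - A \geq 0$ forces the same for $\tilde A$. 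Hence $\tilde A \in \mbox{Obstr}_0((\alpha-1)P)$, which is therefore nonempty, and so $\alpha \in H_r^{red}(P)$. This proves $H_r(P) \subseteq H_r^{red}(P)$, and with the two inclusions noted above we conclude $H_r(P) = H_r^{red}(P) \subseteq H_f(P)$.

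I expect the only genuine subtlety to be the bookkeeping in the transport step: verifying that $\tilde A$ stays effective and genuinely off $P$, and that passing from $A$ to the linearly equivalent $\tilde A$ does not disturb the universally quantified ("for all $f$") obstruction condition — both of which rest on the invariance furnished by the substitution $f \mapsto f-g$. The essential point, and the place where the degree-one hypothesis is used in an irreplaceable way, is the computation $\Delta(\chi_Q) = Q - W$ for a leaf $Q$; it is exactly this leaf-equivalence that lets chips at $P$ be moved to a distinct vertex without changing the divisor class, and hence the rank. (The reduced-divisor machinery of Proposition~\ref{canon-rep} offers an alternative route and is the natural tool for attacking the general conjecture $H_r^{red}(P)=H_r(P)$, but for the present special case the leaf-transport argument is the most direct.)
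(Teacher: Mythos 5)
Your proof is correct, and it reaches the conclusion by a genuinely different route than the paper. The shared skeleton is the same: reduce to showing $H_r(P)\subseteq H_r^{red}(P)$ (since $H_r^{red}(P)\subseteq H_f(P)$ is Theorem~\ref{containment} and $H_r^{red}(P)\subseteq H_r(P)$ is definitional), note that $\mbox{Obstr}((\alpha-1)P)\neq\emptyset$, and use invariance of the obstruction condition under linear equivalence via the substitution $f\mapsto f-g$ (the paper states exactly this invariance as its first step). The difference lies in how an obstruction is moved off $P$. The paper invokes Proposition~\ref{canon-rep}: it replaces an arbitrary obstruction $E$ by its $v_0$-reduced representative $F$, choosing $v_0$ to be the unique neighbor of $P$ (if $\deg P=1$) or the degree-one neighbor $Q$, and the G-parking inequalities then force $F(P)=0$. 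You instead use the explicit identity $\Delta(\chi_Q)=Q-W$ for a leaf $Q$ with neighbor $W$, which exhibits $P$ as linearly equivalent to a single vertex $W\neq P$, and transport the part of an obstruction supported at $P$, writing $A=A'+\beta P$ and passing to $\tilde A=A'+\beta W$. Your version is more elementary and self-contained (no reduced-divisor machinery at all), and it carries a small technical advantage: your $\tilde A$ is manifestly effective, whereas the paper's $F$ must additionally satisfy $F(v_0)\geq 0$ in order to lie in $Div_+^k(G)$ --- this is true, because the class of the effective divisor $E$ has an effective reduced representative, but it is not addressed in the paper's proof. What the reduced-divisor route buys is generality, as you observe in your closing remark: it furnishes canonical representatives on arbitrary graphs and so is the natural tool for the general conjecture $H_r(P)=H_r^{red}(P)$, whereas leaf transport is intrinsically tied to the present hypothesis --- for a $2$-edge-connected graph, injectivity of $S_P^{(1)}$ shows $P$ is not linearly equivalent to any other effective divisor of degree one, so no analogue of $W$ exists.
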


\begin{proof}
First, notice the condition obstructing a divisor $\alpha P$ of rank $k-1$ from gaining rank $k$ is preserved under linear equivalence of divisors: Given $E,F\in Div_+^k(G)$ and $E=F+\Delta(g)$ for some $g\in \mathcal{M}(G)$, the condition $\Delta(f)+\alpha P-E\not\geq 0$ is equivalent to the condition $\Delta(f-g)+\alpha P-F\not\geq 0$. \\

Now, for any $E\in\mbox{Obstr}(\alpha P)$, Proposition~\ref{canon-rep} guarantees that $E$ is linearly equivalent to a $v_0$-reduced divisor $F$. If $P$ has degree one, let $v_0$ be the unique neighbor of $P$. Since the map $v\mapsto F(v)$ is a G-parking function, we must have $0\leq F(P)<1=\mbox{outdeg}_{\{P\}}(P)$, and thus $F(P)=0$. Separately, let $v_0$ be a degree-one neighbor of $P$ if it exists. Notice that $\mbox{outdeg}_{(V(G)-\{v_0\})}(P)=1$, whereas $0\leq F(v)\not<\mbox{outdeg}_{(V(G)-\{v_0\})}(v)=0$ for $v\in V(G)-\{v_0,P\}$. Thus, we must have $F(P)=0$ for $F$ to be a G-parking function.

\end{proof}

Next, we define a family of functions that will be useful in determining certain elements of numerical semigroups.

\begin{definition}
Let $G$ be a graph with vertex set $V(G) = \{P_1, P_2, \ldots, P_n\}$. The indicator function $f_{P_i}$ is defined by
\[ f_{P_i}(P_j) = \begin{cases}
-1 & \text{if } i = j,\\
0  & \text{otherwise.}
\end{cases}\]
\end{definition}

\begin{remark}\label{degP}
\emph{Notice that $\Delta(f_P)=(\sum_{Q \in N(P)}Q)-(\deg P)P$; hence, we have $\deg P \in H_f(P)$.}
\end{remark}

Next, we show that $\deg(P)$ is the smallest nonzero element of $H_f(P)$, provided that the vertex $P$ is not a cut-vertex of $G$.

\begin{theorem}\label{min-degree}
Let $G$ and $G'=G-P$ be connected graphs. Then $\min\{a\in H_f(P)|a\neq 0\}=\deg(P)$.
\end{theorem}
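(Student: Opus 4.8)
The plan is to prove the statement in two directions. First, Remark~\ref{degP} already gives us $\deg(P) \in H_f(P)$ via the indicator function $f_P$, so we know $\deg(P)$ is \emph{an} element. The real content is to show that no smaller positive value can occur, i.e., that if $\alpha \in H_f(P)$ and $\alpha > 0$, then $\alpha \geq \deg(P)$. Equivalently, for any $f \in \mathcal{M}(G)$ with $\Delta_v(f) \geq 0$ for all $v \neq P$, we must have $-\Delta_P(f) \geq \deg(P)$ whenever $-\Delta_P(f) > 0$. The connectivity hypotheses (both $G$ and $G - P$ connected, i.e.\ $P$ is not a cut-vertex) should be exactly what forces this lower bound.

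My approach is to normalize $f$ and then analyze it on the neighborhood of $P$. By Observation~\ref{fact}, I may add a constant so that $f(v) \leq 0$ for all $v$, and in fact I would like to arrange that $\max_v f(v) = 0$. The key is to examine the set $Z = \{v \in V(G) : f(v) = 0\}$ of vertices achieving the maximum. For any vertex $v \neq P$ with $\Delta_v(f) \geq 0$, the defining formula $\Delta_v(f) = \sum_{w \in N(v)}(f(v) - f(w)) \geq 0$ says $f(v) \deg(v) \geq \sum_{w \in N(v)} f(w)$; when $f(v)$ is maximal (equal to $0$), every neighbor $w$ satisfies $f(w) \leq f(v)$, so each summand $f(v) - f(w) \geq 0$, and $\Delta_v(f) \geq 0$ is automatic — but more usefully, if $v \in Z$ and $v \neq P$ has $\Delta_v(f) \geq 0$ this is consistent, while if $v \in Z$ had a neighbor with strictly smaller value we could still have $v \in Z$. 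The cleaner route is to look at $Z$ and argue that, because $G$ is connected and $P$ is not a cut-vertex, the structure of $Z$ relative to $N(P)$ forces the poles at $P$ to total at least $\deg(P)$.

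Concretely, I would argue as follows. Suppose $\alpha = -\Delta_P(f) > 0$, so $\Delta_P(f) = f(P)\deg(P) - \sum_{w \in N(P)} f(w) < 0$, giving $\sum_{w \in N(P)}(f(w) - f(P)) = \alpha$. Normalize so $f(P) = 0$ by adding the constant $-f(P)$; then $\alpha = \sum_{w \in N(P)} f(w)$, and I want to show this sum is at least $\deg(P)$. Since $\Delta_v(f) \geq 0$ off $P$, one shows using connectivity of $G - P$ that each neighbor $w \in N(P)$ must satisfy $f(w) \geq 1$ — the intuition being that a neighbor with $f(w) = 0$ while $f(P) = 0$ would, via the non-cut-vertex hypothesis and the harmonicity-type inequality $\Delta_w(f) \geq 0$, propagate the value $0$ through the connected graph $G - P$ back into a contradiction with $\alpha > 0$. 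If every $f(w) \geq 1$ for $w \in N(P)$, then $\alpha = \sum_{w \in N(P)} f(w) \geq |N(P)| = \deg(P)$, completing the bound.

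The main obstacle, and the step requiring the most care, is justifying that every neighbor of $P$ takes value at least $1$ after normalizing $f(P) = 0$ and $\max f = 0$ — this is precisely where the hypothesis that $P$ is not a cut-vertex enters and must be used in full. The delicate point is a maximum-principle argument: on $G - P$, the restriction of $f$ behaves subharmonically (the sign of $\Delta_v(f) \geq 0$ bounds $f(v)$ below by an average of its neighbors including possibly $P$), so I expect to set $M = \max_{v \neq P} f(v)$, consider the nonempty level set $Z_M = \{v \neq P : f(v) = M\}$, and show that if $M = f(P)$ then $Z_M$ together with $P$ would have to include a full connected component of $G$, contradicting either $\alpha > 0$ or the connectivity of $G$ and $G - P$. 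Getting this maximum-principle propagation exactly right — handling the boundary interaction at $P$ and ensuring the non-cut-vertex hypothesis rules out the degenerate equality case — is the crux of the argument; the counting $\alpha \geq \deg(P)$ is then immediate.
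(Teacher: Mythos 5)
Your overall architecture is right and matches the paper's: the upper bound comes from the indicator function (Remark~\ref{degP}), and the content is the lower bound, which you reduce to showing that, after normalizing $f(P)=0$, every neighbor $w\in N(P)$ has $f(w)\geq 1$, whence $-\Delta_P(f)=\sum_{w\in N(P)}f(w)\geq \deg(P)$. That counting step is exactly the paper's. However, your justification of the key claim has a genuine gap: you propose to run the propagation at the \emph{maximum} level set $Z_M=\{v\neq P: f(v)=M\}$, and with the sign convention in force this yields nothing. At a vertex $v\neq P$ attaining the maximum, every summand of $\Delta_v(f)=\sum_{w\in N(v)}\left(f(v)-f(w)\right)$ is nonnegative, so the hypothesis $\Delta_v(f)\geq 0$ is automatically satisfied and forces no rigidity whatsoever --- you in fact observe this yourself in the middle of the proposal (``$\Delta_v(f)\geq 0$ is automatic''), yet the final paragraph still rests the crux on exactly this maximum-principle step, which therefore fails. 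Likewise, your claim that a neighbor with $f(w)=0=f(P)$ ``propagates the value $0$ through $G-P$'' is unjustified unless $0$ is already known to be the \emph{global minimum} of $f$; since you have not excluded vertices with $f<0$, superharmonicity at $w$ gives no constraint there either.

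The correct tool --- and the paper's proof --- is a \emph{minimum} principle. Let $W_f$ be the set of global minimizers of $f$. If some $w_0\in W_f$ with $w_0\neq P$ exists, then minimality gives $\Delta_{w_0}(f)=\sum_{v\in N(w_0)}\left(f(w_0)-f(v)\right)\leq 0$, while $w_0\neq P$ gives $\Delta_{w_0}(f)\geq 0$; equality forces $f(v)=f(w_0)$ for every neighbor $v$, so all neighbors are again global minimizers. Propagating through the \emph{connected} graph $G-P$ makes $f$ constant on $V(G)-\{P\}$ at the minimum value, so $f(P)-f(w)\geq 0$ for all $w\in N(P)$ and $\Delta_P(f)\geq 0$, contradicting the pole at $P$. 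Hence $W_f=\{P\}$, i.e., $f(v)>f(P)$ for all $v\neq P$ (this is the paper's Porism~\ref{unique-min}), and since $f$ is integer-valued your counting finishes the proof. So your proposal needs its extremal argument reversed (equivalently, apply your maximum principle to $-f$, which is subharmonic off $P$); as written, the crucial step would not go through.
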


\begin{proof}
Put $b=\min\{a\in H_f(P)|a\neq 0\}$. Define $W_f=\{w\in V(G)|f(w)=\min_{v\in V(G)}f(v)\}$ for any $f\in \mathcal{M}(G)$. We first show $b\geq \deg(P)$. So, suppose $\exists f$ with $\triangle(f)=A-cP$, where $0\leq A, 0<c<\deg(P)$, and $P\notin \mbox{Supp}(A)$. First, assume $W_f-\{P\}\neq \emptyset$. Take any $w_0\in W_f-\{P\}$. Then $\triangle_{w_0}(f)=\sum_{v\in N(w_0)}(f(w_0)-f(v))\leq 0$. Since $f(w_0)\leq f(v)\,\forall v\in N(w_0)$ and $\triangle_{w_0}(f)\geq 0$, we must have $f(w_0)=f(v)\,\forall v\in N(w_0)$. Since $G'$ is connected, $\exists w_1\in N_{G'}(w_0)$ and $f$ must be locally constant at $w_1$ (i.e., $f(w_1)=f(v)$ if $v\in N_{G'}(w_1)$) by the same argument; propagating along the vertices of $G'$ in this way, we conclude $f$ must be constant on $G'$ as $\triangle_{w}(f)\geq 0\,\forall w\in V(G')$. Thus, $\forall w\in V(G')$, we have $f(w)=\min_{v\in V(G)}f(v)$, implying $f(P)-f(w)\geq 0$ for every $w\neq P$; this contradicts the assumption that $\triangle_P(f)=\sum_{w\in N_G(P)} (f(P)-f(w))=-c<0$.\\

Thus, we must have $W_f=\{P\}$; i.e., $f(v)>f(P)\, \forall v\in V(G')$. WLOG, let $f(P)=0$. Then $-\triangle_P(f)=-\sum_{v\in N_{G}(P)}(f(P)-f(v))=c<\deg(P)$. This means $\sum_{v\in N_G(P)}(f(v)-f(P))<\deg(P)$, which is impossible; hence $b\geq \deg(P)$.
By Remark~\ref{degP}, $b=\deg(P)$.
\end{proof}

\begin{porism}\label{unique-min}
Let $G$ and $G-P$ be connected graphs, and let $\triangle(f)=A-cP$ with $A\geq 0$ and $P\notin\mbox{supp}(A)$. Then $f(P)<f(v)\,\forall v\in V(G-P)$.
\end{porism}

\begin{remark}\label{need G-P be connecetd}
\emph{The connectedness of $G-P$ is necessary for both Theorem~\ref{min-degree} and Porism~\ref{unique-min}, as the following example shows. Let $P$ be a specific vertex in a clique $\Omega_m$ of order $m\geq 2$, and join $P$ to a new vertex $v$ by an edge. Consider the function $f$ defined by $f(v)=1$ and $f(u)=0$ for $u\neq v$. Clearly, $\Delta(f)=v-P$, whereas $\deg(P)=m$; nor is $P$ the unique minimum of $f$. Note that $G-P$ has two components.}
\end{remark}

\begin{proposition}\label{multi-comp G-P} Let $P\in V(G)$. Let $G_1,\cdots,G_m$, where $m\geq 2$, be the connected components of $G-P$. Then $\deg_{G_i}P$, the number of edges incident with $P$ in $G[V(G_i)\cup\{P\}]$ (the subgraph induced by $P$ and $V(G_i)$), belongs to $H_f(P)$ for $1\leq i\leq m$.
\end{proposition}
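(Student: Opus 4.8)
The plan is to exhibit, for each fixed $i$, an explicit witness function $f\in\mathcal{M}(G)$ whose divisor has the shape $\Delta(f)=A-(\deg_{G_i}P)P$ with $A\geq 0$ and $P\notin\supp(A)$; by the description of $H_f(P)$ this immediately yields $\deg_{G_i}P\in H_f(P)$. The natural candidate is the indicator function of the component $G_i$, namely $f(v)=1$ for $v\in V(G_i)$ and $f(v)=0$ otherwise (so $f(P)=0$ and $f$ vanishes on every other component). This is a mild variant of the indicator functions behind Remark~\ref{degP} and generalizes the function used in Remark~\ref{need G-P be connecetd}, where $G_i$ is a single vertex.

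The key structural fact I would record first is that, since $G_1,\dots,G_m$ are the connected components of $G-P$, distinct components communicate in $G$ only through $P$: there is no edge of $G$ joining $V(G_i)$ to $V(G_j)$ for $i\neq j$, so every neighbor of a vertex $v\in V(G_i)$ lies in $V(G_i)\cup\{P\}$. With this in hand I would compute $\Delta_v(f)$ coordinate by coordinate from the formula $\Delta_v(f)=f(v)\deg v-\sum_{w\in N(v)}f(w)$, splitting into three cases. At $P$ we have $f(P)=0$, so $\Delta_P(f)=-\sum_{w\in N(P)}f(w)$; the only neighbors of $P$ on which $f=1$ are the $\deg_{G_i}P$ neighbors lying in $G_i$, whence $\Delta_P(f)=-\deg_{G_i}P$. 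For $v\in V(G_i)$, writing $e(v)\in\{0,1\}$ for whether $v$ is adjacent to $P$, the $\deg(v)$ neighbors of $v$ consist of $\deg(v)-e(v)$ vertices of $G_i$ (each contributing $f=1$) together with possibly $P$ (contributing $f=0$), so $\Delta_v(f)=\deg(v)-(\deg(v)-e(v))=e(v)\geq 0$. For $v$ in any other component $G_j$, both $f(v)$ and $f$ on all neighbors of $v$ vanish, giving $\Delta_v(f)=0$.

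Assembling these, I obtain $\Delta(f)=\sum_{v\in V(G_i)}e(v)\,v-(\deg_{G_i}P)P$, where the effective part $A=\sum_{v\in V(G_i)}e(v)\,v$ is supported on $V(G_i)$ and hence omits $P$. This is exactly a divisor of the required form, so $\deg_{G_i}P\in H_f(P)$, and the argument is uniform in $i$.

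I do not expect a genuine obstacle here: the witness is forced and the verification is a short case check. The one point that actually carries the proof — and the only place the component structure of $G-P$ is used — is the observation that a vertex of $G_i$ has no neighbors outside $V(G_i)\cup\{P\}$, which is precisely what forces $\Delta_v(f)=e(v)\geq 0$ rather than a possibly negative value; I would therefore state this explicitly before the case analysis. As a consistency check, $\sum_{v\in V(G)}\Delta_v(f)=0$ is automatic, and the positive contributions $\sum_{v\in V(G_i)}e(v)$ indeed total $\deg_{G_i}P$.
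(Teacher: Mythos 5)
Your proposal is correct and uses exactly the witness the paper uses — the indicator function of the component $G_i$ (value $1$ on $V(G_i)$, $0$ elsewhere) — with the paper leaving the coordinate-by-coordinate verification implicit where you spell it out. Same approach; no further comparison needed.
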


\begin{proof}
Consider the function $f$ defined on $V(G)$ by $f(u)=1$ for $u\in V(G_i)$ and $f(u)=0$ otherwise. Then $f$ has pole only at $P$ of order $\deg_{G_i}$.
\end{proof}

Combining Theorem~\ref{min-degree} and Proposition~\ref{multi-comp G-P}, we obtain the following result.

\begin{corollary}\label{min-degree-connectivity}
Let $P\in V(G)$. Then $\min\{a\in H_f(P)|a\neq 0\}=\deg(P)$ if and only $P$ is not a cut-vertex of $G$.
\end{corollary}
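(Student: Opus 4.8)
The plan is to prove Corollary~\ref{min-degree-connectivity} by combining the two results that precede it, treating the ``if'' and ``only if'' directions separately according to whether $G-P$ is connected. The key structural dichotomy is that a vertex $P$ fails to be a cut-vertex precisely when $G-P$ is connected (recall $G$ itself is assumed connected, so removing a non-cut-vertex leaves one component, while removing a cut-vertex leaves at least two).

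First I would handle the direction where $P$ is \emph{not} a cut-vertex. Then $G-P$ is connected, so Theorem~\ref{min-degree} applies directly and yields $\min\{a\in H_f(P)\mid a\neq 0\}=\deg(P)$. This direction is essentially immediate and requires no new work beyond invoking the earlier theorem; the only point to note explicitly is that $G$ connected together with $P$ not a cut-vertex forces $G-P$ to be connected, which is the hypothesis Theorem~\ref{min-degree} needs.

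For the converse, I would assume $P$ \emph{is} a cut-vertex and show that $\min\{a\in H_f(P)\mid a\neq 0\}<\deg(P)$, so the minimum cannot equal $\deg(P)$; this is the contrapositive of the ``only if'' direction. Since $P$ is a cut-vertex, $G-P$ splits into components $G_1,\dots,G_m$ with $m\geq 2$. By Proposition~\ref{multi-comp G-P}, each $\deg_{G_i}(P)\in H_f(P)$, and each is a positive integer (since $G$ is connected, $P$ has at least one neighbor in each component). The crucial arithmetic observation is that $\sum_{i=1}^m \deg_{G_i}(P)=\deg(P)$, because every edge incident with $P$ lands in exactly one component $G_i$. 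With $m\geq 2$ summands, each positive, we get $\deg_{G_1}(P)<\deg(P)$, exhibiting a nonzero element of $H_f(P)$ strictly smaller than $\deg(P)$. Hence the minimum is $<\deg(P)$.

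I expect the main (though still modest) obstacle to be making the equivalence airtight rather than any deep difficulty: one must verify the cleanly stated graph-theoretic fact that for a connected $G$, the vertex $P$ is a cut-vertex if and only if $G-P$ is disconnected, and confirm that when $P$ is a cut-vertex each component genuinely contributes a positive $\deg_{G_i}(P)$ so that the decomposition $\deg(P)=\sum_i \deg_{G_i}(P)$ has at least two strictly positive terms. Given these, the two cited results slot together and the biconditional follows, completing the proof.
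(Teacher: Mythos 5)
Your proposal is correct and matches the paper's intended argument exactly: the paper gives no separate proof, stating only that the corollary follows by ``combining Theorem~\ref{min-degree} and Proposition~\ref{multi-comp G-P},'' which is precisely your two-case split (Theorem~\ref{min-degree} when $G-P$ is connected; the decomposition $\deg(P)=\sum_i \deg_{G_i}(P)$ with $m\geq 2$ positive summands, via Proposition~\ref{multi-comp G-P}, when $P$ is a cut-vertex). Your explicit attention to the fact that each component contributes a strictly positive $\deg_{G_i}(P)$ is a welcome bit of rigor the paper leaves implicit.
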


Recall that edge connectivity of a graph $G$, denoted by $\lambda(G)$, is the minimum number of edges of $G$ whose deletion disconnects $G$. We say that $G$ is $(k+1)$-edge connected if and only if no set of $k$ edges disconnects $G$. Baker and Norine in~\cite{BN} defined, for any vertex $P\in V(G)$ and any $k\in \mathbb{Z}^+$, the Abel-Jacobi map $S_P^{(k)}: Div_+^k(G)\rightarrow Jac(G)$, where $D \mapsto [D-kP]$ (here, $[A]$ denotes the class in $Jac(G)$ of the divisor $A$), and they proved the following result.

\begin{theorem}\cite{BN} The Abel-Jacobi map $S^{(k)}_P$ is injective if and only if the graph is $(k+1)-$edge connected.
\end{theorem}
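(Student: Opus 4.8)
The plan is to prove both directions of the equivalence by relating the injectivity of the Abel-Jacobi map $S_P^{(k)}$ to the collision of two effective divisors of degree $k$ in the Jacobian, and then to interpret such a collision combinatorially via edge cuts. The key reformulation is this: $S_P^{(k)}$ fails to be injective precisely when there exist distinct $D_1, D_2 \in Div_+^k(G)$ with $[D_1 - kP] = [D_2 - kP]$, equivalently $D_1 - D_2 = \Delta(f)$ for some $f \in \mathcal{M}(G)$ with $D_1 \neq D_2$. So I would first reduce the statement to: there exist distinct effective divisors of degree $k$ that are linearly equivalent if and only if $G$ is \emph{not} $(k+1)$-edge connected (i.e., some set of $k$ edges disconnects $G$).

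For the direction that non-$(k+1)$-edge-connectedness produces a non-injective map, I would start from an edge cut of size at most $k$. Suppose a set of at most $k$ edges separates $V(G)$ into $V_1 \sqcup V_2$ with both parts nonempty. Consider the indicator-type function $f$ that is constant on $V_1$ and a different constant on $V_2$ (as in the proof of Proposition~\ref{multi-comp G-P}); then $\Delta(f)$ is supported on the boundary vertices and its positive and negative parts each have degree equal to the number of cut edges, say $j \leq k$. By adding an effective divisor of degree $k - j$ to both sides (placed off the relevant support, or using $P$ itself), one manufactures two distinct effective divisors of degree $k$ that are linearly equivalent, witnessing non-injectivity. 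I would take care to normalize so that $kP$ can be subtracted appropriately and the two divisors genuinely differ.

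For the converse, I would argue the contrapositive: if $G$ is $(k+1)$-edge connected, then $S_P^{(k)}$ is injective. Here I would invoke the theory of $v_0$-reduced divisors from Proposition~\ref{canon-rep}: each class in $Jac(G)$ has a unique $P$-reduced representative, so two effective divisors $D_1, D_2$ of degree $k$ are linearly equivalent only if they reduce to the same $P$-reduced divisor. The combinatorial heart is to show that under $(k+1)$-edge connectivity, the reduction process (Dhar's burning algorithm / the $G$-parking function condition, via $\mbox{outdeg}_A$) cannot identify two distinct degree-$k$ effective divisors; an essential set $A$ on which the parking inequality fails would have to be fed by at most $k$ boundary edges, contradicting high edge connectivity. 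The main obstacle I anticipate is making this converse direction rigorous: translating the edge-connectivity hypothesis into a clean statement about the outdegree sums $\sum_{v \in A}\mbox{outdeg}_A(v)$ being at least $k+1$ for every proper nonempty $A$, and then showing this forces the $G$-parking representatives of degree $k$ to coincide with their original divisors — that is, forbidding any nontrivial firing that preserves effectivity and degree. Alternatively, since the result is attributed to Baker and Norine, I would cite the relevant structural lemma from~\cite{BN} if a self-contained argument proves cumbersome, but the outline above is the route I would attempt first.
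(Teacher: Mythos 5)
First, a point of reference: the paper does not prove this theorem at all --- it is imported verbatim from Baker and Norine \cite{BN} --- so there is no internal proof to compare yours against; your proposal has to be judged on its own terms. On those terms it is essentially correct. The reformulation (non-injectivity of $S_P^{(k)}$ is equivalent to the existence of two distinct, linearly equivalent divisors in $Div_{+}^{k}(G)$) is immediate from the definition of $Jac(G)$, and your cut-to-collision direction works as stated: if deleting $j \leq k$ edges disconnects $G$ into $V_1 \sqcup V_2$, the $0/1$ indicator function of $V_1$ (the two constants must differ by exactly $1$) has $\Delta(f) = A - B$ with $A, B$ effective of degree $j$ and disjoint nonempty supports, and adding \emph{any} fixed effective divisor of degree $k-j$ to both produces the desired distinct pair --- no care about where the padding sits is actually needed.

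The converse, which you flag as your main obstacle, in fact closes in a few lines from exactly the two ingredients you name, so there is no genuine gap, only an unfinished step. The translation you ask for is the identity $\sum_{v \in A} \mathrm{outdeg}_A(v) = e(A, V(G)\setminus A)$, the number of edges leaving $A$; hence $(k+1)$-edge connectivity of a connected graph says precisely that this sum is at least $k+1$ for every nonempty proper subset $A$. Now suppose $D \in Div_{+}^{k}(G)$ fails to be $P$-reduced. Effectivity gives condition 1) of a $G$-parking function for free, so condition 2) must fail: there is a nonempty $A \subseteq V(G)\setminus\{P\}$ with $D(v) \geq \mathrm{outdeg}_A(v)$ for all $v \in A$, whence
\begin{equation*}
k = \deg(D) \;\geq\; \sum_{v \in A} D(v) \;\geq\; \sum_{v \in A} \mathrm{outdeg}_A(v) \;\geq\; k+1,
\end{equation*}
a contradiction. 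So under $(k+1)$-edge connectivity \emph{every} effective divisor of degree $k$ is already $P$-reduced, and the uniqueness statement in Proposition~\ref{canon-rep} then forbids two distinct ones from being linearly equivalent; injectivity follows. Note that you never need Dhar's burning algorithm or any analysis of the reduction process itself --- only the definition of a $G$-parking function plus uniqueness of reduced representatives. Alternatively, you can avoid reduced divisors entirely (this is the more standard route, and closer to Baker--Norine's own argument): if $D_1 - D_2 = \Delta(f)$ with $D_1, D_2$ effective of degree $k$ and $f$ non-constant, let $A$ be the set where $f$ attains its maximum; then for $v \in A$ one has $D_1(v) \geq \Delta_v(f) \geq \mathrm{outdeg}_A(v)$, and summing over $A$ exhibits an edge cut of size at most $k$.
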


\begin{corollary} \label{min-connectivity}
Let $P$ be a vertex of a graph $G$. Then $\lambda(G) \leq \min \left\{ \alpha \in H_f(P) : \alpha \neq 0 \right\}$.
\end{corollary}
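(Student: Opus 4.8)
The plan is to leverage the Baker--Norine theorem stated immediately above, which characterizes injectivity of the Abel--Jacobi map $S_P^{(k)}$ in terms of edge connectivity. Write $\alpha = \min\left\{ a \in H_f(P) : a \neq 0 \right\}$; since $\alpha \in \mathbb{N}$ and $\alpha \neq 0$, we have $\alpha \geq 1$ (and the minimum exists because $\deg P \in H_f(P)$ by Remark~\ref{degP} whenever $G$ is connected of order $\geq 2$). The strategy is to exhibit two \emph{distinct} effective divisors of degree $\alpha$ that collapse to the same class in $Jac(G)$, thereby forcing $S_P^{(\alpha)}$ to fail injectivity.

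First I would unpack the membership $\alpha \in H_f(P)$: there exists $f \in \mathcal{M}(G)$ with $\Delta(f) = A - \alpha P$, where $A \geq 0$ and $P \notin \supp(A)$. Since every principal divisor has degree zero, $\deg(A) = \alpha$, so $A \in Div_{+}^{\alpha}(G)$. The divisor $\alpha P$ likewise lies in $Div_{+}^{\alpha}(G)$. These two divisors are genuinely different: $P \in \supp(\alpha P)$ because $\alpha \geq 1$, whereas $P \notin \supp(A)$ by hypothesis.

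Next I would compute their images under $S_P^{(\alpha)}$. By definition $S_P^{(\alpha)}(A) = [A - \alpha P]$ and $S_P^{(\alpha)}(\alpha P) = [\alpha P - \alpha P] = [0]$. But $A - \alpha P = \Delta(f)$ is principal, so $[A - \alpha P] = [0]$ as well; hence $S_P^{(\alpha)}(A) = S_P^{(\alpha)}(\alpha P)$. Thus $S_P^{(\alpha)}$ sends two distinct elements of its domain to the same class and is therefore not injective. Applying the Baker--Norine theorem, which asserts that $S_P^{(\alpha)}$ is injective if and only if $G$ is $(\alpha+1)$-edge connected, we conclude that $G$ is not $(\alpha+1)$-edge connected; by the definition of edge connectivity this means some set of $\alpha$ edges disconnects $G$, i.e. $\lambda(G) \leq \alpha$, which is the claim. (If $G$ is disconnected the statement is trivial, as $\lambda(G) = 0$.)

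I do not anticipate a genuine obstacle here: the argument is essentially a direct translation of the monopole data into a collision for the Abel--Jacobi map. The only step requiring attention is verifying $A \neq \alpha P$, and this is precisely where the defining condition $P \notin \supp(A)$ of a $P$-monopole is used; everything else follows mechanically from the stated theorem.
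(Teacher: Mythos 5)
Your proof is correct and is essentially the paper's own argument: both rest on the Baker--Norine injectivity theorem and the observation that a monopole $\Delta(f)=A-\alpha P$ with $P\notin\supp(A)$ produces two distinct effective divisors $A$ and $\alpha P$ colliding under $S_P^{(\alpha)}$. The only difference is presentational --- you argue the contrapositive at the minimal element $\alpha$, while the paper fixes $\gamma\leq\lambda(G)-1$ and derives a contradiction with injectivity --- so the two proofs are the same in substance.
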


\begin{proof}
Put $\lambda =\lambda(G)$, and consider $1\leq \gamma \leq \lambda-1$. Note that $G$ is $\left(\gamma + 1\right)$-edge connected. Suppose there exists $f \in \mathcal M(G)$ such that $\Delta(f)=A-\gamma P$ with $A \geq 0$ and $P \notin \supp A$. Then $A \in Div_+^{\gamma}G$, since a principal divisor has degree zero. Notice $S_P^{(\gamma)}(A)=[A-\gamma P]=[0]=[\gamma P -\gamma P] = S_P^{(\gamma)}(\gamma P)$, since $[\Delta(f)]=[0]$ in $Jac(G)$. By the injectivity of $S_P^{(\gamma)}$, $A=\gamma P$, which is a contradiction. Thus, $\gamma \in G_f(P)$ for all $\gamma\in \{1,\cdots,\lambda-1\}$.
\end{proof}

\begin{remark}
\emph{The (vertex) connectivity of a graph $G$, denoted by $\kappa(G)$, is the minimum number of vertices of $G$ whose deletion disconnects $G$. Denote by $\delta(G)$ the minimum degree of a graph $G$ and by $\lambda_2(G)$ the second smallest eigenvalue of the Laplacian of $G$. Then, it is well known that $\delta(G)\geq\lambda(G)$ and $\lambda_2(G)\leq\kappa(G)\leq\lambda(G)$ (see, for example, \cite{Bollobas}). We note that Corollary~\ref{min-connectivity} extends the chain of inequalities from the opposite side of $\lambda_2(G)$, with a parameter also stemming from the Laplacian of $G$.}
\end{remark}

\section{On Weierstrass semigroups of some graph families} \label{exs}

In this section, we determine $H_f(P)$ and consider its connection with $H_r(P)$ for trees, unicyclic graphs, complete graphs.\\

\begin{proposition}
If $G$ is a tree, then $H_f(P)=H_r(P)=H_r^{red}(P)=\N$ for any vertex $P$ of $G$.
\end{proposition}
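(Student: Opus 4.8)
The plan is to reduce everything to the single fact that a tree of order $n$ has genus $g = |E(G)| - |V(G)| + 1 = (n-1) - n + 1 = 0$, and then to build the four equalities around this observation. First I would record that, since $g = 0$, Lemma~\ref{hrgaps} immediately gives $|G_r(P)| = g = 0$, so $G_r(P) = \emptyset$ and hence $H_r(P) = \N$. This disposes of one of the four sets at once and costs nothing.

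Next I would pin down the rank function on multiples of $P$. Applying the Riemann--Roch identity (R-R) with $g = 0$, the canonical divisor $K$ has degree $2g - 2 = -2$, so for every $\alpha \geq 0$ the divisor $K - \alpha P$ has negative degree and therefore rank $-1$; substituting gives $r(\alpha P) = \deg(\alpha P) + 1 - g + r(K - \alpha P) = \alpha$. In particular $r(\alpha P) - r((\alpha - 1)P) = 1$ for every $\alpha \geq 1$, so the rank-jump half of the definition of $H_r^{red}(P)$ is satisfied for all positive $\alpha$.

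The crux is then to verify the obstruction half, namely that $\mbox{Obstr}_0((\alpha - 1)P) \neq \emptyset$ for each $\alpha \geq 1$; this is the step I expect to carry the real content, although it becomes easy precisely because $g = 0$. With $r((\alpha - 1)P) = \alpha - 1$ we have $k = \alpha$, so the relevant obstruction set lives in $Div_{+}^{\alpha}(G)$. I would take any vertex $Q \neq P$ (which exists since $n \geq 2$) and the divisor $A = \alpha Q$, which is effective of degree $\alpha$ with $P \notin \supp(A)$. Because $\deg\big((\alpha - 1)P - A\big) = (\alpha - 1) - \alpha = -1 < 0$, no translate $(\alpha - 1)P + \Delta(f)$ can dominate $A$ (degree is preserved under $\Delta$, and an effective divisor dominating $A$ would have degree $\geq \alpha$). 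Hence $A \in \mbox{Obstr}_0((\alpha - 1)P)$, so $\alpha \in H_r^{red}(P)$ for every $\alpha \geq 1$, giving $H_r^{red}(P) = \N$.

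Finally I would assemble the chain. By Theorem~\ref{containment}, $\N = H_r^{red}(P) \subseteq H_f(P) \subseteq \N$, forcing $H_f(P) = \N$; and $H_r^{red}(P) \subseteq H_r(P) = \N$ forces $H_r^{red}(P) = H_r(P)$. Combined with the first step this yields $H_f(P) = H_r(P) = H_r^{red}(P) = \N$. As an independent and fully constructive check of $H_f(P) = \N$, I would also note that rooting the tree at $P$ and taking $f$ to be the indicator of the subtree hanging off a fixed neighbor $v$ of $P$ produces $\Delta(f) = v - P$, so $1 \in H_f(P)$; since $H_f(P)$ is closed under addition by Proposition~\ref{semigp} and contains $0$, this alone recovers $H_f(P) = \N$. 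The only point requiring care throughout is the degree bookkeeping in the obstruction argument, which is exactly where the hypothesis that $G$ is a tree (equivalently $g = 0$) is genuinely used.
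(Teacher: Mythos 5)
Your proof is correct, and its core --- computing $r(\alpha P)=\alpha$ from Riemann--Roch with $g=0$ and exhibiting $\alpha Q$ (for $Q\neq P$) as a degree-obstructed divisor with $P\notin\supp(\alpha Q)$ --- is exactly the argument the paper uses to get $H_r(P)=H_r^{red}(P)=\N$ (the paper phrases the obstruction for $\alpha P$ in $Div_+^{\alpha+1}(G)$; yours is the same argument shifted by one). Where you diverge is in how $H_f(P)=\N$ is obtained: you make $H_r^{red}(P)=\N$ the engine and then invoke Theorem~\ref{containment} to squeeze $\N=H_r^{red}(P)\subseteq H_f(P)\subseteq\N$, whereas the paper proves $H_f(P)=\N$ independently and combinatorially, via Proposition~\ref{multi-comp G-P} and Corollary~\ref{min-degree-connectivity} (a vertex of a tree is either a leaf or a cut-vertex attached to each branch by a single edge, so $1\in H_f(P)$). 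Your route is economical in that it reuses the general containment theorem and needs no separate construction; the paper's route has the virtue of exhibiting explicit monopole functions and of being independent of the rank machinery, which makes the four-way equality a genuine confluence of two computations rather than a consequence of one. Your ``independent check'' via the indicator function of the subtree hanging off a neighbor of $P$ is in substance the paper's argument, so in effect you supplied both proofs. One pedantic point: your obstruction argument is stated for $\alpha\geq 1$, so to literally conclude $H_r^{red}(P)=\N$ you should note that $\alpha=0$ also qualifies ($r(0)=r(-P)+1$ and the zero divisor lies in $\mbox{Obstr}_0(-P)$ since $\Delta(f)-P$ has degree $-1$); this costs one line and does not affect the conclusion $H_f(P)=\N$, which only needs $\Z^+\subseteq H_r^{red}(P)$ together with $0\in H_f(P)$.
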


\begin{proof}
That $H_f(P)=\N$ follows immediately from Proposition~\ref{multi-comp G-P} and Corollary~\ref{min-degree-connectivity}, since a vertex $P$ on a tree is either an end-vertex (thus of degree one) or a cut-vertex joined by one edge to each of the components of $G-P$. That $H_r(P)=H_r^{red}(P)=\N$ can be seen as follows. Since $g=|E(G)|-|V(G)|+1=0$, $\deg(K-\alpha P)=-2-\alpha<0$ and thus $r(K-\alpha P)=-1$ for any $\alpha\in \mathbb{N}$. Hence $r(\alpha P)=\deg(\alpha P)+1-g+r(K-\alpha P)=\deg(\alpha P)=\alpha$ for $\alpha\in \mathbb{N}$. Notice that any divisor $E\in Div_+^{\alpha+1}(G)$ obstructs $\alpha P$ from attaining the rank of $\alpha+1$: no function $f$ satisfies $\Delta(f)\geq E-\alpha P$, since $\deg(\Delta(f))=0$ and $\deg(E-\alpha P)=1$. Thus, $(\alpha+1)Q$ for $Q\in V(G)-\{P\}$ is an obstructing divisor for $\alpha P$; i.e., $\mbox{Obstr}_0(\alpha P)=\{A\in\mbox{Obstr}(\alpha P):\supp(A)\not\ni P\}\neq \emptyset$.\hfill
\end{proof}

\begin{proposition}\label{unicyclic}
Let $G$ be a unicyclic (genus one) graph. Then $H_r(P)=\N-\{1\}$ for any $P\in V(G)$, and
$H_f(P)=\left\{\!\!\begin{array}{ll}
\N-\{1\} & \mbox{if $P$ is a degreee two vertex lying on the unique cycle of $G$,} \\
\N & \mbox{otherwise.}
\end{array}
\right.$
\end{proposition}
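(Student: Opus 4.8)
The plan is to prove the two assertions separately, and to recast the stated dichotomy for $H_f(P)$ in a convenient form: the bridges of a unicyclic graph are precisely its non-cycle edges, so \emph{$P$ is a degree-two vertex on the unique cycle if and only if no bridge is incident to $P$}. For $H_r(P)$, note that $G$ has genus $g=1$, so Lemma~\ref{hrgaps} already forces $G_r(P)$ to be a single integer in $[0,2g-1]=\{0,1\}$; it remains only to decide which. Since $r(0\cdot P)=0$ while $r(-P)=-1$ (any divisor of negative degree has rank $-1$), the defining relation of $H_r(P)$ holds at $\alpha=0$, so $0\in H_r(P)$, and therefore the unique gap is $1$ and $H_r(P)=\N\setminus\{1\}$ for every $P$. (Alternatively, $Jac(G)\cong\Z/k\Z$ is nontrivial, so $P\not\sim Q$ for some vertex $Q$; this gives $r(P)=0$ and hence $1\notin H_r(P)$ directly.)

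For $H_f(P)$ I split along the reformulated dichotomy. In the \emph{otherwise} case some bridge meets $P$, and the goal reduces to $1\in H_f(P)$, which gives $H_f(P)=\langle1\rangle=\N$: if $\deg P=1$ this is Remark~\ref{degP}, while if $\deg P\geq2$ a bridge $Pu$ makes $P$ a cut-vertex whose removal leaves the $u$-side as a component of $G-P$ joined to $P$ by the single edge $Pu$, so Proposition~\ref{multi-comp G-P} yields $1\in H_f(P)$. In the remaining case $P$ is a degree-two vertex on the cycle and no bridge meets $P$, so $G-P$ is connected (deleting $P$ turns the cycle into a path and leaves every hanging tree attached). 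Hence Theorem~\ref{min-degree}, equivalently Corollary~\ref{min-degree-connectivity}, gives $\min\{a\in H_f(P):a\neq0\}=\deg P=2$, so $1\notin H_f(P)$, while $2\in H_f(P)$ by Remark~\ref{degP}. Closure under addition (Proposition~\ref{semigp}) then reduces everything to exhibiting one function with a pole of order exactly $3$ at $P$, because $\langle2,3\rangle=\N\setminus\{1\}$.

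Producing that order-three pole is the crux and the step I expect to be the main obstacle. Writing the cycle as $P=v_0,v_1,\dots,v_{k-1}$, I would use that a degree-zero divisor $\sum_i a_iv_i$ is principal on $C_k$ exactly when $\sum_i i\,a_i\equiv0\pmod{k}$ (the explicit form of $Jac(C_k)\cong\Z/k\Z$). Choosing $A=2v_1+v_{k-2}$ gives $\deg A=3$, $P\notin\supp(A)$, and $2\cdot1+(k-2)\equiv0\pmod k$, so $A-3P$ is principal on the cycle, i.e.\ it is the divisor of some $f_0\in\mathcal M(C_k)$. I would then extend $f_0$ to all of $G$ by making it constant on each tree hanging off the cycle, equal to the value of $f_0$ at that tree's root vertex. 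The verification to carry out is that this extension contributes $0$ to the coefficient $\Delta_v(f)$ at every non-root tree vertex $v$ and leaves the coefficient $\Delta_c(f)$ unchanged at every cycle vertex $c$ (its added tree-neighbors all carry the value $f_0(c)$ and cancel), so that the divisor of $f$ on $G$ is again $A-3P$ and hence $3\in H_f(P)$. Everything else is a direct appeal to results already in hand, so the care concentrates in this principality computation on $C_k$ and in confirming that the constant-on-trees extension creates no spurious poles or zeros.
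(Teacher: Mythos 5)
Your proposal is correct, and its skeleton matches the paper's proof: $H_r(P)=\N\setminus\{1\}$ via Lemma~\ref{hrgaps} together with $0\in H_r(P)$ (the paper states this more tersely, but the content is the same); the ``otherwise'' case handled by Proposition~\ref{multi-comp G-P} and the degree-one observation (your bridge reformulation is equivalent to the paper's split into ``$P$ a cut-vertex'' versus ``$G-P$ connected''); and the degree-two-on-cycle case reduced, via Theorem~\ref{min-degree} and the semigroup property, to exhibiting a pole of order $3$. The one genuine divergence is at that crux step. The paper simply writes down the function: $f(P)=0$, $f\equiv 1$ on the tree rooted at the cycle-neighbor $v_k$, and $f\equiv 2$ elsewhere, then checks $\Delta(f)=2v_2+v_{k-1}-3P$. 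You instead invoke the explicit isomorphism $Jac(C_k)\cong\Z/k\Z$ given by $\sum_i a_i v_i\mapsto\sum_i i\,a_i \bmod k$ to certify that $2v_1+v_{k-2}-3v_0$ is principal on the cycle, and then extend the resulting function constantly over the hanging trees --- note this is literally the same divisor the paper's function realizes, so your route reconstructs the paper's construction abstractly. Both work; the trade-off is that the paper's version is entirely self-contained, whereas yours leans on the explicit form of $Jac(C_k)$, a standard fact but one the paper never proves (it only records $Jac(G)\cong\Z/k\Z$ for unicyclic $G$), so a complete write-up would need the short verification that the principal divisors of $C_k$ are exactly the degree-zero divisors with $\sum_i i\,a_i\equiv 0\pmod k$ (check the images of $\Delta(f_{v_j})$ vanish and count indices). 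Your constant-on-trees extension and the check that it introduces no new poles or zeros is exactly right and is implicitly what the paper's function does as well.
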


\begin{proof}
Since $g=1$, we have $H_r(P)=\mathbb{N}-\{1\}$ (for any $P\in V(G)$) by Lemma~\ref{hrgaps}. If $P$ is a cut-vertex of $G$, then $\min_{i}(\deg_{G_i}(P))=1$, where $\deg_{G_i}(P)$ denotes the degree of $P$ within the $i$-th component subgraph $G[V(G_i)\cup \{P\}]$. Thus, $H_f(P)=\N$ by~Proposition~\ref{multi-comp G-P}.\\

If $G-P$ is connected for $P\in V(G)$, then either $P$ has $\deg(P)=1$ or $P$ is a degree-two vertex on the unique cycle $C$ of $G$. If $\deg(P)=1$, then Theorem~\ref{min-degree} yields $H_f(P)=\mathbb{N}$. If $P$ is a degree-two vertex on $C$, then Theorem~\ref{min-degree} yields $2=\min(H_f(P)-\{0\})$; we will show that $3\in H_f(P)$, and then $H_f(P)=\langle 2,3\rangle=\mathbb{N}-\{1\}$ in this case.\\

Let the vertices on the unique cycle $C$ of $G$ be cyclically labelled $v_1, \ldots v_k$; note $k\geq 3$, and let $E(C)$ be the set of edges of $C$. By \emph{the tree rooted at} $v\in V(C)$, we mean the component of $G-E(C)$ which contains $v$. WLOG, let $P=v_1$ be a vertex of degree two. Consider the function $f$ on $V(G)$ defined by $f(v_1)=0$, $f(u)=1$ for any vertex $u$ belonging to the tree rooted at $v_k$, and $f(u)=2$ otherwise. Then $\Delta(f)=E-3P$, where $E\geq 0$ and $P\notin \supp (E)$, and thus we have $3\in H_f(P)$.\hfill
\end{proof}

\begin{remark}
\emph{Proposition~\ref{unicyclic} shows that $H_r(P)\subseteq H_f(P)$ at any point $P$ on a unicyclic graph $G$. But, can we see the containment $H_r(P)\subseteq H_f(P)$ in an alternate way, through the Riemann-Roch lens?}\\

\emph{To this end, let $D=0P=0$ and $g=1$; we have from R-R the equation $r(0)=0+1-1+r(K-0)$. But, $r(0)=0$ by definition for the rank of a divisor; this yields $r(K)=0$, which means $K$, besides having degree zero, equals zero as a divisor in $Jac(G)$. Thus, noting $r(-P)=-1$ by definition, we have $r(P)=1+r(-P)=0=r(0P)$, and hence $1\in G_r(P)$. With $r(K-2P)=-1$, we find $r(2P)=1$. Now, $r(P)=0$ and $r(2P)=1$ imply the existence of an $E\in Div_+^1(G)$ and an $h\in M(G)$ such that $\Delta(h)\not\geq E-P$, whereas $\Delta(h)\geq E-2P$. Clearly, $E$ cannot be $P$. So, $\Delta(h)=E'-2P$ with $E'\in Div_+^2(G)$ and $P\notin \supp(E')$. This gives us $2\in H_f(P)$.} \\

\emph{To see that $3\in H_r(P)$ also belongs to $H_f(P)$, consider $E\in Div_+^2(G)$ such that $\Delta(f)\not\geq E-2P$ $\forall f\in M(G)$. If $P\notin\supp(E)$, then $3\in H_r^{red}(P)\subseteq H_f(P)$ and we are done. Otherwise, noting $E\neq 2P$, put $E=Q+P$, for $Q\neq P$. Applying R-R to $E$, we have $r(P+Q)=1$; this means that for any $Q'\in V(G)-\{P,Q\}$, there is a $f'\in M(G)$ such that $\Delta(f')\geq Q'-(P+Q)$. Thus, $\Delta(f')=Q''+Q'-(Q+P)$. Notice the condition $(\forall f\in M(G)$, $\Delta(f)\not\geq E-D)$ holds when $E$ is replaced by $E'=E+\Delta(h)$. So, Let $E'=E+\Delta(f')=Q+P+[Q''+Q'-(Q+P)]=Q''+Q'$. We cannot ensure $Q''\notin\{P,Q,Q'\}$. However, if $Q''\neq P$, then we've shown that $E'\in\mbox{Obstr}_0(2P)$ and thus $3\in H_r(P)\cap H_f(P)$.}\\

\emph{If $Q''=P$, we can switch to $H_f(Q)$. It is significant that $r(\alpha P)$, hence $H_r(P)$, depends only on $\alpha$ and not on the vertex $P$ in question; i.e., $H_r(P)=H_r(Q)$. Now, if there is $h\in M(G)$ such that $\Delta(h)\geq Q+P-2Q=P-Q$, then $\Delta(h)=P-Q$. Then, $\Delta(-h)=-\Delta(h)=Q-P=Q+P-2P\geq Q+P-2P$, contradicting the earlier assumption on the obstructing divisors for $2P$. Thus, $E=P+Q$ obstructs the divisor $2Q$ from attaining the rank $r(2Q)+1=2$. Since we are in the case $Q''=P$, we see that $E'=E+\Delta(f')=P+Q'$, where $Q\notin\{P,Q'\}$, also obstructs $2Q$; this implies that $3\in H_r(Q)\cap H_f(Q)$. Note further that if there is a graph isomorphism taking vertex $P$ to vertex $Q$, then we have $H_f(Q)=H_f(P)$.}\hfill $\Box$
\end{remark}

A bit more insight into the interplay between $H_r(P)$ and $H_f(P)$ is provided in the next example. To this end, the notion of the ``chip-firing game" (also known as the ``dollar game" or the ``sandpile toppling game") discussed in~\cite{BN} is helpful. Given a divisor $D$, assign $D(v)$ chips (or dollars) to each vertex $v$ of a graph $G$. Each \emph{move} of the game consists of a vertex either taking one chip from each of its neighbors or giving one chip to each of its neighbors. The following result is informative.

\begin{lemma}\cite{BN}\label{chip-firing} Two divisors $D$ and $D'$ on a graph $G$ are linearly equivalent if and only if there is a sequence of moves in the chip-firing game which transforms the configuration corresponding to $D$ into the configuration corresponding to $D'$.
\end{lemma}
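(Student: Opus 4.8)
The plan is to set up an exact dictionary between single chip-firing moves and the addition of principal divisors of the form $\pm\Delta(f_v)$, and then to translate the algebraic notion of linear equivalence into a finite sequence of such moves, and conversely. First I would record the key computation: if a vertex $v$ \emph{fires}, giving one chip to each of its neighbors, then $v$ loses $\deg(v)$ chips while each $w\in N(v)$ gains one, so the divisor changes by $(\sum_{w\in N(v)}w)-(\deg v)v=\Delta(f_v)$, which is exactly the quantity computed in Remark~\ref{degP}. Dually, if $v$ \emph{borrows}, taking one chip from each neighbor, the divisor changes by $-\Delta(f_v)$. Thus every move adds $\pm\Delta(f_v)$ to the current divisor for some vertex $v$.

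For the implication that a sequence of moves yields linear equivalence, suppose moves transform the configuration of $D$ into that of $D'$, and let $n_v^+$ (resp.\ $n_v^-$) denote the number of times $v$ is fired (resp.\ borrows). By the dictionary above and the linearity of $\Delta$ (Observation~\ref{fact}), the total change is
$$D'-D=\sum_{v\in V(G)}(n_v^+-n_v^-)\,\Delta(f_v)=\Delta\!\left(\sum_{v\in V(G)}(n_v^+-n_v^-)f_v\right),$$
which is a principal divisor; hence $D'$ and $D$ are linearly equivalent.

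For the converse, suppose $D'-D=\Delta(f)$ for some $f\in\mathcal{M}(G)$. Using Observation~\ref{fact} I would first replace $f$ by $f+b$ for a suitable $b\in\Z$ so that $f(v)\le 0$ for every vertex $v$, noting that this leaves $\Delta(f)$ unchanged. Since $f_v$ is the $(-1)$-indicator at $v$, writing $f=\sum_{v}f(v)\,(-f_v)$ and applying linearity gives $\Delta(f)=\sum_{v}(-f(v))\,\Delta(f_v)$ with every coefficient $-f(v)\ge 0$. Therefore, firing each vertex $v$ exactly $-f(v)$ times, in any order, produces total change $\Delta(f)=D'-D$ and so transforms the configuration of $D$ into that of $D'$, completing the equivalence.

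The step requiring the most care is the bookkeeping of signs in the move-to-Laplacian dictionary, together with the observation that the dollar game imposes no positivity constraint on the vertices: a vertex may hold a negative number of chips at any stage. It is precisely this absence of a legality condition on intermediate configurations that lets the moves commute and be reordered freely, so that only the net multiset of firings and borrowings matters. (Were nonnegativity required at each step, realizing a prescribed principal divisor by legal moves would be a genuinely more delicate matter, and the clean equivalence above would fail.)
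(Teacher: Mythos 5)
Your proof is correct. Note that the paper itself gives no proof of this lemma --- it is quoted from Baker--Norine \cite{BN} as a known result --- so there is no internal argument to compare against; your write-up is essentially the standard one: chip-firing moves correspond exactly to adding $\pm\Delta(f_v)$, and since these vectors span the full lattice $\Delta(\mathcal{M}(G))$ of principal divisors (after normalizing $f$ to be nonpositive via Observation~\ref{fact}, so that all firing multiplicities are nonnegative), the two notions coincide. Your closing observation is also the right one to emphasize: the argument hinges on the dollar game permitting vertices to go into debt, which is what allows moves to be reordered freely and reduces everything to the net firing vector.
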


\begin{remark}
\emph{When the rank of a divisor cannot be readily determined from the Riemann-Roch theorem of Baker and Norine, the ideas developed herein may be able to help, as the next example will show.}
\end{remark}

\begin{example}
\emph{Given a cycle on $n-1$ vertices and a new vertex $P$, the wheel graph $W_{1,n-1}$ on $n$ vertices is formed by drawing an edge from each vertex of the cycle to $P$. Notice that here $n\geq 4$ and the genus (or cycle rank) of $W_{1, n-1}$ is $n-1$. Since $\deg(P)=n-1$, by Theorem~\ref{min-degree}, $\min H_f(P)=n-1$. If $H_r(P)\subseteq H_f(P)$ as conjectured, then we must have $r(0P)=r(P)=\ldots=r((n-2)P)=0$ and $r((n-1)P)=0$ or $r((n-1)P)=1$.} \\

\emph{It is easy to see that $r((n-1)P)\geq 1$. Let $E\in Div_+^1(G)$; then $E=Q_0\in V(W_{1,n-1})$. If $Q_0=P$, then any constant function $f$ satisfies $\Delta(f)\geq E-(n-1)P=-(n-2)P$. If $Q_0\neq P$, then the function $f$, defined by $f(P)=0$ and $f(Q)=1$ otherwise, yields $\Delta(f)=\sum_{Q\neq P}Q-(n-1)P\geq E-(n-1)P$. Alternatively, with one chip-firing at vertex $P$, Lemma~\ref{chip-firing} tells us that $D=(n-1)P$ is linearly equivalent to $D'=\sum_{Q\neq P}Q$; i.e., $D-D'$ is the divisor of a function. } \\

\emph{On the other hand, if $r((n-2)P)\geq 1$, then, taking $E=Q_0\neq P$ to be a fixed vertex on the cycle, there must exist a non-constant function $f$ with $\Delta(f)\geq Q_0-(n-2)P$. This means that $\Delta(f)=\left(Q_0+\sum_{i=1}^{n-3}Q_i\right)-(n-2)P$, with $Q_0\neq P$. By Lemma~\ref{chip-firing}, the existence of $\Delta(f)$ is equivalent to being able to transform the configuration given by $D=(n-2)P$ to the one given by $D'=Q_0+\sum_{i=1}^{n-3}Q_i$ through a sequence of chip-firing moves. After some trial and error, we may be forgiven for becoming convinced that this is an impossible task: take, for example,  $W_{1,4}$; starting with $D$, one returns to $D$ after five moves, with no effective divisor in between.}\\

\emph{In order to prove that the task is indeed impossible, we might first try R-R and find, from $r((n-2)P)=(n-2)+1-(n-1)+r(K-(n-2)P)$, that R-R simply yields $0\leq r((n-2)P)=r(K-(n-2)P)$. Since $\deg(K-(n-2)P)=2(n-1)-2-(n-2)=n-2>0$ for $n\geq 4$, the rank of $(K\!-\!(n-2)P)$ is not immediately clear. Again, assume $r((n-2)P)\geq 1$ and take a vertex $Q_0\neq P$; we find that the $f\in M(G)$ in the preceding paragraph has $\triangle(f)=A+Q_0-\alpha P$, where $A\geq 0,\; P\notin\supp(A+Q_0)$, and $1\leq\alpha\leq n-2$. However, Theorem~\ref{min-degree} rules out the existence of this $f$, as it would mean $H_f(P)\ni\alpha$, where $1\leq\alpha\leq n-2$.}\\

\emph{Thus, we proved $r((n-2)P)=0$ and that the aforementioned task is indeed impossible, since $(n-2)P$ is the only effective divisor in its linear equivalence class. This also implies $r((n-1)P)=1$, since we showed that $r((n-1)P)\geq 1$.} \hfill$\Box$
\end{example}

We look at complete graphs as another example.

\begin{proposition}
Let $P$ be a vertex of the complete graph $K_n$ with $n\geq 2$. Then $H_f(P)=\langle n-1,n\rangle$.
\end{proposition}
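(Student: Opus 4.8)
The plan is to prove the two inclusions $\langle n-1, n\rangle \subseteq H_f(P)$ and $H_f(P)\subseteq \langle n-1,n\rangle$ separately, after recording one computation. Labeling $V(K_n) = \{P, Q_1, \dots, Q_{n-1}\}$ and writing $S = \sum_{u\in V(K_n)} f(u)$, I would first note that because every vertex of $K_n$ has degree $n-1$ and is adjacent to all others, the Laplacian takes the uniform shape $\Delta_v(f) = (n-1)f(v) - (S - f(v)) = n\,f(v) - S$ for each $v$. Everything then reduces to analyzing this single formula.

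For the inclusion $\langle n-1, n\rangle \subseteq H_f(P)$, I would invoke Remark~\ref{degP} to get $n-1 = \deg P \in H_f(P)$, and then exhibit a function witnessing $n \in H_f(P)$: take $f(P)=0$, set one neighbor to $2$ and the remaining ones to $1$, which yields $S = n$, and hence $\Delta_P(f) = -n$ while $\Delta_{Q_i}(f) \geq 0$ for every $i$, so that $\Delta(f) = n Q_1 - nP$. Since $H_f(P)$ is an additive monoid by Proposition~\ref{semigp}, it then contains the whole subsemigroup generated by $n-1$ and $n$.

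The reverse inclusion is where the structural input enters. Given $c \in H_f(P)\setminus\{0\}$ realized by some $f$ with $\Delta(f) = A - cP$, $A\geq 0$, $P\notin\supp(A)$, I would apply Porism~\ref{unique-min} --- legitimate since both $K_n$ and $K_n - P = K_{n-1}$ are connected --- to conclude $f(P) < f(v)$ for all $v \neq P$. After normalizing $f(P)=0$ via Observation~\ref{fact}, integrality forces each $x_i := f(Q_i) \geq 1$, while $c = -\Delta_P(f) = S = \sum_{i=1}^{n-1} x_i$ and the non-pole conditions $\Delta_{Q_i}(f)\geq 0$ become $n\,x_i \geq c$ for every $i$.

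The final step is the arithmetic that turns these constraints into membership in $\langle n-1,n\rangle$: setting $\ell = \min_i x_i \geq 1$, one obtains the two-sided bound $(n-1)\ell \leq c \leq n\ell$ (the left inequality from $c = \sum_i x_i \geq (n-1)\ell$, the right from $n\ell \geq c$), and writing $c = (n-1)\ell + r$ with $0\leq r\leq \ell$ gives $c = (\ell - r)(n-1) + r\,n$, a nonnegative-integer combination of $n-1$ and $n$. I expect the main (mild) obstacle to be precisely this recognition: that the only genuine leverage is the Porism forcing $f(v)\geq f(P)+1$, and that the resulting sandwich on $c$ in terms of $\min_i x_i$ is exactly what is needed to express $c$ over the generators $n-1$ and $n$.
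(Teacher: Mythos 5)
Your proposal is correct and follows essentially the same route as the paper's proof: both inclusions are established the same way, with the Porism forcing $f(P)<f(v)$, the minimum value $\ell$ (the paper's $a$) yielding the sandwich $(n-1)\ell \leq c \leq n\ell$, and the identical algebraic identity $c=(\ell-r)(n-1)+rn$; your witnesses for $n-1$ and $n$ are, up to an additive constant, the paper's indicator functions $f_P$ and $f_P - f_Q$. The only (cosmetic) difference is that you streamline the computation via the uniform formula $\Delta_v(f)=nf(v)-S$, where the paper instead writes the non-pole values as $a+\alpha_i$ and manipulates the sums directly.
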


\begin{proof}
Since $K_n$ and $K_n-P$ are connected, a function $f$ associated with $H_f(P)$ (i.e., having a unique pole at $P$) has a unique minimum at $P$ by Porism~\ref{unique-min}. Choose $Q\in V(K_n-P)$ such that $f(Q)=\min\{f(v)|v\in V(K_n-P)\}$. WLOG, let $f(P)=0$ and $f(Q)=a$; note $a\in \mathbb{Z}^+$. Then, we have

\begin{eqnarray}
0\leq\triangle_Q(f)=\sum_{v\in N(Q)}f(Q)-f(v)=\deg(Q)f(Q)-\sum_{v\in V(K_n-Q)}f(v) \nonumber\\
=(n-1)a-\sum_{i=1}^{n-2}(a+\alpha_i) \ (\mbox{where each}\, \alpha_i\geq 0) \nonumber\\
=(n-1)a-(n-2)a-\sum_{i=1}^{n-2}\alpha_i \nonumber\\
=a-\sum_{i=1}^{n-2}\alpha_i\,;\nonumber\\
\mbox{thus, we have } a\geq \sum_{i=1}^{n-2}\alpha_i\, .\label{a-alphas}
\end{eqnarray}
Now,
\begin{eqnarray*}
-\triangle_P(f)=-[\deg(P)f(P)-\sum_{v\in N(P)}f(v)] \\
=\sum_{v\in N(P)}f(v)=a+\sum_{i=1}^{n-2}(a+\alpha_i)=(n-1)a+\sum_{i=1}^{n-2}\alpha_i \, .
\end{eqnarray*}

Using Inequality~(\ref{a-alphas}) and the fact that each $\alpha_i\in\mathbb{N}$, we have $\displaystyle (n-1)a\leq -\triangle_P(f)=(n-1)a+\sum_{i=1}^{n-2}\alpha_i\leq na$; this means that $-\triangle_P(f)=a(n-1)+i$ for some integer $i$ with $0\leq i\leq a$.
By the chain of equalities $a(n-1)+i=a(n-1)-in+i+in=a(n-1)-i(n-1)+in=(a-i)(n-1)+in$, we find $H_f(P)\subseteq\langle n-1,n\rangle$.\\

To see $\langle n-1, n\rangle\subseteq H_f(P)$, let $f_P$ and $f_Q$ be the two indicator functions at $P$ and $Q$, and let $g=f_P-f_Q$. Then $\triangle(f_P)=[\sum_{v\neq P}v]-(n-1)P$ and $\triangle(g)=nQ-nP$.

\end{proof}

\begin{remark}
\emph{Since the genus (cycle rank) of $K_n$ is ${n\choose 2}-n+1$, which equals $|\N \setminus \langle n-1,n\rangle|$, we have $|\N\setminus H_r(P)|=|\N\setminus H_f(P)|$. Thus, if $H_r(P)\subseteq H_f(P)$ in this case, then we could conclude that $H_r(P)=\langle n-1, n \rangle$ for any vertex $P$ of $K_n$.}  \\
\end{remark}

As implied by Lemma~\ref{hrgaps} and Proposition~\ref{multi-comp G-P}, $H_f(P)\setminus H_r(P)$ can be arbitrarily large. This is seen explicitly in the next result.\\

\begin{proposition}\label{lggap}
For every $n \in \Z^+$,  there is a graph $G$ with vertex $P$ so that $|H_f(P)\setminus H_r(P)| = n$.
\end{proposition}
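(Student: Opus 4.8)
The plan is to construct, for each $n \in \Z^+$, an explicit graph $G$ together with a distinguished vertex $P$ so that the two semigroups $H_f(P)$ and $H_r(P)$ differ in exactly $n$ elements. The cleanest way to force $H_f(P)=\N$ is to make $P$ a cut-vertex joined by a single edge to each component of $G-P$: by Proposition~\ref{multi-comp G-P} we then get $1=\deg_{G_i}(P)\in H_f(P)$, and since $1$ generates $\N$ as a numerical semigroup, $H_f(P)=\N$. Meanwhile, by Lemma~\ref{hrgaps} we have the rigid count $|\N\setminus H_r(P)|=g$, the genus of the graph. So if I build a graph in which $P$ is such a ``pendant'' cut-vertex while simultaneously the genus $g$ equals $n$, then $H_f(P)\setminus H_r(P)=\N\setminus H_r(P)=G_r(P)$ has exactly $n$ elements, which is precisely what is claimed.

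Concretely, I would take $G$ to be a graph of genus $n$ (for instance, a single vertex $P$ joined by one edge to a subgraph $G'$ that carries all the cycles, or even more simply a bouquet-like construction) arranged so that $G-P$ is connected but $P$ meets it through exactly one edge. The simplest realization: let $G'$ be any connected graph of genus $n$ on its own vertex set --- say a vertex $w$ with $n$ loops replaced by $n$ parallel-free cycles, e.g. $n$ triangles all sharing the single vertex $w$ (a ``friendship''-type graph), which is simple and has genus exactly $n$ --- and then attach $P$ to $G'$ by the single edge $Pw$. Then $g(G)=g(G')=n$ since adding a pendant edge and vertex changes neither $|E|-|V|$ count contributing to the cyclomatic number. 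Here $P$ has degree one, $G-P=G'$ is connected, so Theorem~\ref{min-degree} gives $\min\{a\in H_f(P):a\neq 0\}=\deg(P)=1$, forcing $H_f(P)=\N$; and $g=n$ gives $|G_r(P)|=n$ by Lemma~\ref{hrgaps}.

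The key steps in order are therefore: (i) describe the graph $G$ and verify it is finite, simple, and connected; (ii) compute its genus and check $g=n$ via $g=|E(G)|-|V(G)|+1$; (iii) invoke Theorem~\ref{min-degree} (or the degree-one case directly) to conclude $H_f(P)=\N$; (iv) invoke Lemma~\ref{hrgaps} to conclude $|\N\setminus H_r(P)|=g=n$; and (v) combine these to get $|H_f(P)\setminus H_r(P)|=|\N\setminus H_r(P)|=n$, where the first equality uses $H_f(P)=\N$. Step (v) is immediate once the preceding pieces are in place.

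I expect the only genuine point requiring care --- rather than a serious obstacle --- to be the explicit genus computation in step (ii): I must choose a family of graphs whose cyclomatic number is controllably equal to $n$ and whose structure guarantees that $P$ is either a degree-one vertex or a cut-vertex attached by exactly one edge to each component of $G-P$. The friendship-style construction (n triangles sharing a common vertex $w$, with $P$ pendant to $w$) handles this transparently, since $|V|=2n+2$, $|E|=3n+1$, giving $g=3n+1-(2n+2)+1=n$. An equally valid alternative is to let $P$ itself be the common cut-vertex of $n$ cycles each meeting $P$ in a single edge through a private intermediate path, in which case Proposition~\ref{multi-comp G-P} applies with $\deg_{G_i}(P)=1$; but the pendant-vertex version is the most economical and sidesteps any subtlety about what ``single edge to each component'' means.
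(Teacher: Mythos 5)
Your proposal is correct and takes essentially the same approach as the paper: both arguments force $H_f(P)=\N$ by attaching $P$ through a single edge so that $1\in H_f(P)$, and then invoke Lemma~\ref{hrgaps} to conclude $|H_f(P)\setminus H_r(P)|=|\N\setminus H_r(P)|=g=n$. The only difference is cosmetic --- the paper joins $P$ by one edge to each of two graphs $G_1,G_2$ with $g_1+g_2=n$ and exhibits the witnessing function $\Delta(f)=Q-P$ explicitly, whereas you attach $P$ as a pendant vertex to a connected genus-$n$ graph (the friendship graph) and cite Theorem~\ref{min-degree} for $1\in H_f(P)$.
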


\begin{proof}
Let graphs $G_1$ and $G_2$ have genera $g_1$ and $g_2$, respectively, and let $P$ be a new vertex. The graph $G$ is formed from graphs $G_1$, $G_2$ and the vertex $P$ by joining $P$ with an edge to exactly one vertex of $G_i$ for each $i\in\{1,2\}$; see Figure~\ref{gr_bridge} for an illustration. To see that $H_f(P)=\N$, consider the function $f \in \mathcal{M}(G)$ given by
 $$
 f(v)=\begin{cases}
 1 & \text{ if } v \in V(G_1) \\
 0 & \text{ otherwise}.
 \end{cases}
 $$
 Notice $\Delta(f)=Q-P$, where $Q \in V(G_1)$ is adjacent to $P$. However, $G$ has genus $g_1 + g_2$ and thus $|G_r(P)|=g_1 + g_2$, while $H_f(P)$ has no gaps. As a result, $|H_f(P)\setminus H_r(P)| = g_1 + g_2$, which may be any $n\in\Z^+$.
\end{proof}

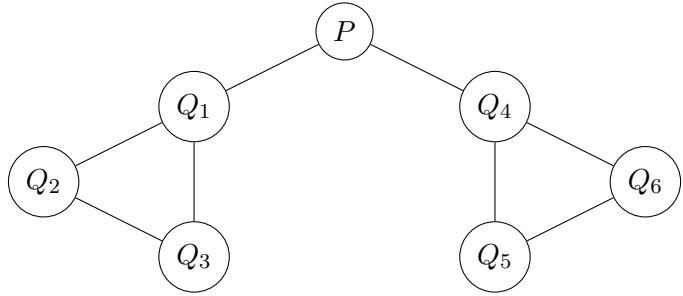
\begin{figure}[h]
\centering
\begin{tikzpicture}
\tikzstyle{every node}=[draw,shape=circle];
\path (2,0) node (v1) {$Q_1$};
\path (0,-1) node (v2) {$Q_2$};
\path (2,-2) node (v3) {$Q_3$};
\path (6,0) node (v4) {$Q_4$};
\path (6,-2) node (v5) {$Q_5$};
\path (8,-1) node (v6) {$Q_6$};
\path (4, 1) node (v7) {$P$};
\draw (v1) -- (v2)
			(v1) -- (v3)
			(v1) -- (v7)
			(v2) -- (v3)
			(v4) -- (v7)
			(v4) -- (v6)
			(v4) -- (v5)
			(v5) -- (v6);
\end{tikzpicture}
\caption{$|\N\setminus H_r(P)|=2$ and $|\N\setminus H_f(P)|=0$}
\label{gr_bridge}
\end{figure}

\section{Conclusion} \label{conclusion}

Our work prompts several questions. First, is it true that $H_r(P)\subseteq H_f(P)$ for any vertex $P$ on any graph $G$ of order at least two? We conjecture that it is. Second, which numerical semigroups arise as a Weierstrass semigroup of a vertex of a finite graph? The analogous problem for points on curves is a deep one, first suggested by Hurwitz \cite{Hurwitz}. Nearly 100 years later, Buchweitz \cite{Buch} proved that not every numerical semigroup is the Weierstrass semigroup of a point on a curve and defined what is now called the Buchweitz Criterion. This problem was further addressed in \cite{eh, Komeda} (see also references therein) and more recently \cite{Kaplan} but remains open. What can be said about the structure of $H_r(P)$? The analogous set for points on curves (defined appropriately) has the property that $H_r(P)=H_f(P)$. However, we see that this fails dramatically for vertices on finite graphs, as demonstrated in Proposition~\ref{lggap}. This leaves one to consider what more can be said about $H_r(P)$. Of course, one may study Weierstrass semigroups of vertices on certain families of graphs. In particular, one may consider covers of graphs and associated semigroups, as has been done for coverings of curves \cite{KW, op2}.

\paragraph*{Acknowledgment}
The authors hereby express their sincere appreciation to the referees for their very careful reading of and helpful comments on an earlier draft of the paper, which has been improved as a result.\\

\end{document}